\documentclass[a4paper,10pt,twoside]{article}

\usepackage{color}

\usepackage[T1]{fontenc}
\usepackage[utf8]{inputenc}
\usepackage{ae,aecompl}

\usepackage{dsfont}
\usepackage{mathtools}
\usepackage{amsmath,amsfonts,amssymb,amsthm}

\usepackage{bera}

\usepackage{geometry}
\geometry{a4paper,portrait,left=3.5cm,right=3.5cm,top=3.5cm,bottom=3.5cm}

\usepackage{fancyhdr}
\pagestyle{fancy}

\fancyhf{}
\fancyhead[C]{\emph{E. Hingant \& R. Yvinec --  The Stochastic Becker-D\"oring Process}}
\fancyfoot[CE,CO]{}
\fancyfoot[LE,RO]{\thepage}

\newtheorem{theorem}{Theorem}[section]%
\newtheorem*{hypothesis*}{Hypothesis}%
\newtheorem*{definition*}{Definition}%
\newtheorem{lemma}{Lemma}[section]%
\newtheorem{proposition}[lemma]{Proposition}%

%
%

%
%
%
%
%

\newcommand{\Nb}{\mathbb N}
\newcommand{\Rb}{\mathbb R}

\newcommand{\Ac}{\mathcal A}
\newcommand{\Ec}{\mathcal E}
\newcommand{\Fc}{\mathcal F}

\newcommand{\Uc}{\mathcal U}

\newcommand{\Pg}{\mathbf P}
\newcommand{\Eg}{\mathbf E}

\newcommand{\ds}{\displaystyle}
\newcommand{\veps}{\varepsilon}
\newcommand{\vphi}{\varphi}

\usepackage{color}
\usepackage[normalem]{ ulem }
\newcommand{\com}[1]{\textcolor{black}{#1}}

\begin{document}

\title{\Large\bfseries The Becker-D\"oring process: pathwise convergence and phase transition phenomena}

\author{Erwan Hingant\footnote{Departamento de Matem\'atica, Universidad del B\'io-B\'io, Concepci\'on, Chile -- ehingant@ubiobio.cl}
\and 
Romain Yvinec\footnote{PRC, INRA, CNRS, IFCE, Université de Tours, 37380 Nouzilly, France -- romain.yvinec@inra.fr}}

\date{\small \today}

\maketitle

\begin{abstract}

 In this note, we study an infinite reaction network called the stochastic Becker-D\"oring process, a sub-class of the general coagulation-fragmentation models. We prove pathwise convergence of the process towards the deterministic Becker-D\"oring equations which improves classical tightness-based results. Also, we show by studying the asymptotic behavior of the stationary distribution, that the phase transition property of the deterministic model is also present in the finite stochastic model. Such results might be interpreted closed to the so-called gelling phenomena in coagulation models. We end with few numerical illustrations that support our results.

 \medskip

 \noindent \textbf{Keywords}:  Becker-Döring; infinite-dimensional reaction network; law of large numbers; non-equilibrium potential; entropy.
 
 \noindent \textbf{AMS MSC 2010}: 60J75; 60B12; 28D20.
 
\end{abstract}

\section{Introduction}

In this note we compare two versions of the Becker-D\"oring model that represent time evolution of spatially homogeneous clusters of particles. The Becker-D\"oring model is a reduced kinetic model that explains phase transition phenomena in physics, chemistry, and more recently gained in popularity in biology, see surveys \cite{Hingant2017,Slemrod2000}. Rules of the model are very simple: cluster sizes change either by adding particles one-by-one or by losing particles one-by-one. This is summarized by the following kinetic scheme (or infinite reaction network):  denoting a cluster of $i\geq 1$ particles by $C_i$, we have,

\begin{equation*} \label{eq:kinetic_scheme}
 C_1+ C_i \xrightleftharpoons[b_{i+1}]{a_{i}} C_{i+1}\ , \quad i\geq 1,
\end{equation*}
where $a_i$ and $b_{i+1}$ are the size-dependent reaction rate constants, and $C_1$ is the elementary particle (size $1$). The first version named the \emph{deterministic Becker-Döring model} (DBD) is formulated as an infinite set of ordinary differential equations, each one for the evolution of the concentration of clusters of size $i\geq1$.  The second one is the stochastic counterpart named the \emph{stochastic Becker-Döring model} (SBD), which corresponds to a Markov chain on a finite state space for the number of clusters of each size. Both can be deduced from general coagulation-fragmentation models. Law of large number is a classical fact for these models. Nevertheless, most of the results, e.g.  \cite{Jeon1998}, are obtained from tightness arguments. Here, we give a proof of pathwise convergence of the SBD to DBD model in the natural space associated to mass conservation, rather than in Hilbert space as in \cite{Jeon1998}. This differs from classical results, see \cite[Theorem 2.11]{Kurtz1970} and \cite[Theorem 2.2]{Kurtz1978}, in the sense that this model lacks of a Lipschitz property. Here, we take advantage of monotonicity properties of the rate constants, together with fine control of large-sized clusters. Note, in \cite{Sun2017}, the authors proved a pathwise law of large numbers for the SBD process in the case of bounded rates (which makes the model having a Lipschitz property). Here, the class of kinetic rates allowed is more general and naturally adapted with up-to-date results on the DBD model \cite{Hingant2017}. 

\medskip

Then, we show that the phase transition arising in the steady state of the deterministic model, see \cite{Ball1988}, also occurs in the stochastic model. The phase transition, detailed below, may be interpreted as a mass transfer from the finite-size particles of the model to an infinite-size particle. This is similar to the well-known gelling phenomena in coagulation-fragmentation models \cite{Jeon1998,Fournier2009}. Up to our knowledge, gelation was not previously reported for the SBD model. Our result is based on a classical comparaison of the non-equilibrium potential of the stationary distribution of the SBD model with the relative entropy of the DBD model. Such technique is known in detailed-balanced finite chemical networks, see \cite{Anderson2015,Anderson2015b}, and is here extended to an infinite-dimensional framework.

\medskip

We conclude our paper with numerical illustrations that support our results, and discuss potential future directions of research to further understand the phase transition phenomena in the SBD model.


\section{Preliminaries and main results}

In this section we briefly recall necessary results on the DBD model and the construction of the SBD version and its invariant measure, together with our main theorems. Let us denote by $c_i(t)$ the concentration of clusters at $t\geq 0$ and size $i\geq 1$, the deterministic Becker-Döring equations read:
\begin{equation} \label{eq:BD}
 \begin{array}{lr}
  \ds  \frac{d c_1}{dt}  =  -2J_1(c) - \sum_{i=1}^{+\infty} J_i(c)\,, & \\[0.8em]
  \ds  \frac{d c_i}{dt}  =   J_{i-1}(c) - J_i(c)\,, & i\geq 2\,, 
 \end{array}
\end{equation}
where  $J_i(c)=a_ic_1c_i-b_{i+1}c_{i+1}$ for $i\geq 1$, and $a_i$, $b_i$ non-negatives rate contants. The natural space to obtain solutions, related to the mass of the system, is the Banach space
\begin{equation*}
 X = \big\{(c_i)_{i\geq 1} \in \Rb^{\Nb\com{-\{0\}}} \ : \ \|c\| \coloneqq\sum_{i=1}^{+\infty} ic_i < +\infty \big\}\,,
\end{equation*}
\com{with norm $\|\cdot\|$}. Denote by $X^+$ the non-negative cone of $X$. Existence of solutions to the DBD equations have been proved for very general constants rates ($a_i=O(i)$) and initial data (\com{$c(t=0)\in X^+$}) in \cite[Corollary 2.3]{Ball1986}. In \cite{Ball1986}, the authors gives some conditions for uniqueness of solution, but we based our work on the result by \cite[Theorem 2.1]{Laurencot2002}, where uniqueness for a large class of kinetic rates is given, namely\com{:}
\begin{hypothesis*}
 There exists a positive constant $K$ such that the reaction rate constants satisfy
 \begin{equation} \label{bd:hyp3}
  \begin{array}{rclr}
   \ds a_{i+1}-a_i&\ds \leq& \ds K, & \ds i\geq 1, \\[0.5em]
   \ds b_{i}-b_{i+1}&\ds \leq& \ds K, & \ds i\geq 2.
  \end{array}
 \end{equation}
\end{hypothesis*}
Such hypothesis is able to tackle the lack of Lipschitz property in $X$ of the right-hand side of \eqref{eq:BD}, by controlling the large-sized clusters thanks to this monotonicity/growth control. Note that \eqref{bd:hyp3} implies, in particular, $a_i \leq \min(K,a_1) i$ which is the classical hypothesis required for existence in \cite{Ball1986}. Thanks to the above mentioned existence and uniqueness results together with results on the regularity of the solutions \cite[Proposition 3.1]{Ball1986}, we have, for each non-negative set of parameters $\{a_i\}$ and $\{b_{i+1}\}$  and $c^{\rm in}\in X^+$, under hypothesis \eqref{bd:hyp3}, that there exists a unique solution $c \in C([0,+\infty),X^+)$ such that $c(0)=c^{\rm in}$ which satisfies \eqref{eq:BD} for a.e. $t\geq0$. Moreover,  by \cite[Corollary 2.6]{Ball1986}, this solution preserves \textit{mass}, in the sense that, for all $t\geq 0$, 
\begin{equation} \label{bd:mass-conservation}
 \sum_{i=1}^{+\infty} i c_i(t) = \sum_{i=1}^{+\infty} i c_i(0) =: \rho \,,
\end{equation}
where $\rho$ is the mass of the initial condition.

\medskip

In the following we give the construction of the stochastic counterpart of the Becker-Döring model. For $n\geq 1$ and $\rho>0$, we define the state space
\begin{equation*} 
 \Ec_\rho^n = \Big\{(c_i)_{i\geq 1} \in \Rb^\Nb \ : \ \forall i\geq 1\,, \tfrac n \rho c_i \in \Nb,\ \sum_{i=1}^{+\infty} ic_i = \rho\Big\}\,.
\end{equation*}
An important fact is that $\Ec_\rho^n$ is a finite state space that can be embedded into $X^+$.
%
On a probability space $\left(\Omega,\Fc,\Pg\right)$, we define the SBD process as the pure jump Markov process with value in $\Ec_\rho^n$, and having infinitesimal generator $\Ac^n$ given by, for all Borel function $\psi :\Rb^\Nb \to \Rb$ and finite on $\Ec_\rho^n$, 
\begin{equation} \label{bd:generator}
\Ac^n(\psi)(c) = \frac n \rho \sum_{i= 1}^{+\infty} \Big( A_i(c) [\psi(c + \tfrac \rho n \Delta_i)-\psi(c)] + B_{i+1}(c)[\psi(c-\tfrac \rho n \Delta_i)-\psi(c)] \Big)\,,
\end{equation}
where the transition rates are
\begin{equation*}
A_1(c) =  a_1  c_1(c_1 - \tfrac \rho n)\,,\qquad A_i(c) = a_i c_1  c_i \,,i\geq 2 \,, \qquad B_{i}(c)=  b_{i}  c_i\,,i\geq 2\,,
\end{equation*}
and jump transitions $\Delta_i = e_{i+1} - e_i -e_1$ with $(e_1,e_2,\ldots)$ the canonical basis of $\Rb^\Nb$, that is $e_{ik} =1$ if $k=i$ and  $0$ otherwise. This process correspond\com{s} to a rescaled version of the continuous time Markov chain associated to the reaction network \eqref{eq:kinetic_scheme} with $n$ particles. The parameter  $\tfrac n \rho$ can be seen as a volume scaling parameter, $\rho$ being the total concentration, and the SBD process with generator \eqref{bd:generator} has the form of a \textit{classical scaling} of a reaction network model in large volume, see \cite{Anderson2015}. As a finite state-space continuous time Markov chain, given an initial law $c^{{\rm in},\, n} \in \Ec_\rho^n$, there exists a unique (in law) SBD process $c^n$ with $c^n(0) = c^{{\rm in},\, n}$ (in law). By construction of $\Ec_\rho^n$, this yields the so-called mass conservation, 
\begin{equation} \label{sbd:mass-conservation}
\sum_{i=1}^{+\infty} i c_i^n(t) = \rho\,.
\end{equation}
 Below we study the behavior of the SBD process as $n$ goes to infinity. Note that the dimension of the state space grows together with $n$, thus this problem differs from standard results on reaction networks (see discussion). For this purpose, we embed the SBD process in the infinite-dimensional space $X$. We denote by $\Eg$ the expectation on the probability space. 

\medskip

We are ready to state the pathwise convergence of the SBD to the DBD equations in the next theorem. 
\begin{theorem}\label{thm:limitBD_unique}
 Under hypothesis \eqref{bd:hyp3} on the rate constants, let a sequence $\{c^{\rm in,\, n}\}$ in $\Ec_\rho^n$ being deterministic and \com{strongly converging in $X$} toward $c^{\rm in}$, \com{namely $\lim_{n\to +\infty}\|c^{\rm in,\, n} - c^{\rm in}\|=0$}. If $\{c^n\}$ is the sequence of SBD processes with  $c^n(0)=c^{{\rm in},\,n}$ and $c$ the unique solution of the DBD equations satisfying $c(0)=c^{\rm in}$ then, for all $T>0$, we have
 \begin{equation*}
 \lim_{n\to +\infty}\Eg \sup_{t\in[0,T]} \|c^n(t) -c(t)\| =0.
 \end{equation*}
\end{theorem}

\medskip

We turn now to the study of the behavior of the stationary distribution of the SBD process, as $n\to\infty$. Here we assume rate constants being positives. The SBD process being a Markov chain in a finite state space, it is clear that it has an unique invariant measure $\Pi^n$ on each irreducible component of the state space, that is on each $\Ec_\rho^n$. Moreover, the SBD process has the detailed balance property: it is reversible with respect to its invariant measure. We may write the invariant measure (see Sec. \ref{sec:mes-stat}) under the form
%
%
%
%
%
%
%
%
%
\begin{equation*}
-\frac \rho n \ln \Pi^n(c) = \sum_{i=1}^{n} \left\{ -c_i \ln\left(\frac{n}{\rho}Q_iz^i\right) + \frac \rho n  \ln \frac n \rho c_i ! + Q_iz^i \right\}+ \frac \rho n \ln B_n^z
\end{equation*}
where $z>0$ is arbitrary, $B_n^z$ is a normalizing constant and 
\[Q_1=1, \qquad Q_i=\prod_{j=1}^{i-1} \frac{a_j}{b_{j+1}}, \quad i\geq 2\,.\]
We clearly recognize a form closed to the relative entropy of the deterministic equations which drives its long-time behavior (see \cite{Ball1986,Slemrod1989}), defined for any $c\in X^+$ by
\begin{equation} \label{eq:entropy}
 \mathcal H (c|c^z) = \sum_{i=1}^{+\infty} \left\{ c_i \left( \ln\frac{c_i}{Q_iz^i} -1 \right) + Q_iz^i \right\}\,,
\end{equation}
where $c^z = \{Q_i z^i\}$. Remark, any equilibrium of the DBD equations have the form of a $c^z$ for some $z>0$. Thus, there is a family of potential candidates for the equilibrium of the DBD equations. Owing to the mass conservation Eq.~\eqref{bd:mass-conservation}, that holds for all finite times, it is natural to expect the equilibrium to satisfy the same relation namely, $\|c^z\|=\rho$. We naturally define the radius of convergence of the series $\|c^z\|$ by
\begin{equation}\label{eq:critical_zs}
z_s:= \left(\limsup Q_i^{1/i}\right)^{-1}\,,
\end{equation}
and the value of $\|c^z\|$ reaches at the radius $z_s$, which leads to the notion of critical mass
\begin{equation*}
\rho_s := \sup_{z<z_s}\|c^z\|\,.
\end{equation*}
By monotonicity of $\|c^z\|$ in $z$, for any $\rho < \rho_s$, we define uniquely $z(\rho)<z_s$ such that $\|c^{z(\rho)}\|=\rho$. If $z_s<+\infty$ and $\rho_s<+\infty$, we have $z(\rho_s)=z_s$. We are now able to state the \emph{phase transition} property arising in the steady-state of the stochastic Becker-Döring model: 
\begin{theorem}\label{thm_mes_stat} 
 Assume $0<z_s<+\infty$. Let $\{c^n\}$ \com{be} a sequence belonging to $\Ec_\rho^n$.
 \begin{enumerate}
  \item If $0<\rho \leq \rho_s$ and $\liminf_{i\to+\infty}Q_i^{1/i}>0$ then,
  \[\lim_{n\to +\infty} -\frac \rho n \ln \Pi^n(c^n) = \mathcal H(c|c^{z(\rho)}), \]
  when $c^n \to c$ strongly in $X$, as $n\to\infty$.
  \item If $\rho > \rho_s$ and $\lim Q_i^{1/i}$ exists then,
  \[\lim_{n\to +\infty} -\frac \rho n \ln \Pi^n(c^n) =  \mathcal H(c|c^{z_s})\,, \] 
  when $c^n \rightharpoonup c$ $weak-*$ in $X$.
 \end{enumerate}
\end{theorem}
\com{Note the $weak-*$ convergence in $X$ is simply the component-wise convergence, that is $c^n \rightharpoonup c$ $weak-*$ in $X$ if, and only if, for all $i \geq 1$, $\lim_{n\to +\infty}\mid c^n_i-c_i \mid =0$.}
 The stationary distribution of the SBD model thus satisfies, if $\rho\leq \rho_s$, $\Pi^n(\cdot) \to \delta_{c^{z(\rho)}}(\cdot)$ \com{as $n\to \infty$}, while if $\rho > \rho_s$, $\Pi^n(\cdot)\to \delta_{c^{z_s}}(\cdot)$ as  $n\to \infty$. 
In the latter case, the stationary distribution thus concentrates on the deterministic state $c^{z_s}$, which has a mass $\rho_s$ strictly inferior than the mass $\rho$ of the initial condition. The quantity $\rho-\rho_s$ is interpreted as the mass which leaves the initial phase and undergoes a phase transition. A similar dichotomy occurs in the large time behaviour of the DBD equations, which is at the corner stone of the phase transition phenomena of the Becker-D\"oring model, see \cite{Ball1986,Slemrod1989}.  Moreover, our numerical illustrations indicate that the loss of mass $\rho-\rho_s$ is contained in a single giant particle, consistently with results on the  Marcus–Lushnikov process \cite{Fournier2009}.

\section{Pathwise convergence} \label{sec:general-case}

This section is dedicated to the proof of Theorem \ref{thm:limitBD_unique} whose final stage is postponed at the end of Sec. \ref{sec:special-case} . In the remainder, we denote by $c^n$ the SBD process in $\Ec_\rho^n$ with initial value the deterministic state $c^{\rm in,\, n}$. We have the elementary fact, for any Borel function $\psi :\Rb^\Nb\to \Rb$ finite on $\Ec_\rho^n$,
\begin{equation}\label{martingale-psi}
\psi(c^n(t)) - \psi(c^{\rm in, n}) - \int_0^t \Ac^n\psi(c^n(s)) ds 
\end{equation}
is an $L^2$-martingale starting from $0$ whose predictable quadratic variation is given by
\begin{multline}\label{quadratic-psi}
\frac n \rho \int_0^t \sum_{i=1}^{+\infty} \Big( A_i(c^n(s)) [\psi(c^n(s) + \tfrac \rho n \Delta_i)-\psi(c^n(s))]^2 \\
+ B_{i+1}(c^n(s))[\psi(c^n(s)-\tfrac \rho n \Delta_i)-\psi(c^n(s))]^2 \Big)ds.
\end{multline}

\subsection{Moments propagation}

In this section we collect some important estimates. The next lemma concerns the control of the coagulation and fragmentation term. Analogous results are known in the deterministic context, see \cite[Theorem 2.2]{Ball1986}.

\begin{lemma}\label{lem:control-coagulation-fragmentation}
 Assume there exists $K_1>0$ such that $a_i\leq K_1i$ for each $i\geq 1$. For each $T>0$, there exists a constant $K_T$, such that, 
 \begin{equation} \label{eq:estimate-sum-B_i}
  \sup_{n\geq 1} \Eg  \int_0^T \sum_{i=1}^{+\infty} \{A_{i}(c^n(s)) + B_{i+1}( c^n(s))\} ds \leq K_T.
 \end{equation}
\end{lemma}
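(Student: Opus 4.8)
The plan is to split the integrand $\sum_{i\geq 1}\{A_i+B_{i+1}\}$ into its coagulation part $\sum_{i\geq 1}A_i(c^n(s))$ and its fragmentation part $\sum_{i\geq 1}B_{i+1}(c^n(s))=\sum_{i\geq 2}b_i c^n_i(s)$, and to bound the two contributions separately. The coagulation part admits a crude deterministic (pathwise) bound; the fragmentation part — about which Hypothesis \eqref{bd:hyp2} says nothing, since it does not constrain the size of $b_i$ — is the genuine difficulty, and I would extract it from Lemma \ref{lem:control-fragmentation}.

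For the coagulation part, I would use that on $\Ec_\rho^n$ one has $0\leq A_1(c)\leq a_1 c_1^2$ and $c_1\leq\|c\|=\rho$, so that by \eqref{bd:hyp2}
\[
\sum_{i=1}^{+\infty} A_i(c)\;\leq\; a_1 c_1^2+c_1\sum_{i=2}^{+\infty}a_i c_i\;=\;c_1\sum_{i=1}^{+\infty}a_i c_i\;\leq\;K\,c_1\sum_{i=1}^{+\infty} i c_i\;=\;K\rho\, c_1\;\leq\;K\rho^2 .
\]
This holds pathwise and uniformly in $n$, hence $\Eg\big[\int_0^T\sum_{i\geq 1}A_i(c^n(s))\,ds\big]\leq K\rho^2 T$ for all $n$.

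For the fragmentation part, I would apply Lemma \ref{lem:control-fragmentation} with $N=2$ and the weight $g_i=i$, which is non-negative and non-decreasing. The structural hypothesis of that lemma holds with $K_0=K$, because $a_i(g_{i+1}-g_i)=a_i\leq Ki=Kg_i$ by \eqref{bd:hyp2}; and the moment condition at time $0$ holds because the initial data $c^{\mathrm{in},n}$ are deterministic of mass $\rho$, so $\sup_{n\geq 1}\Eg[\sum_{i\geq 2}g_i c^n_i(0)]=\sup_{n\geq 1}\sum_{i\geq 2}i\,c^{\mathrm{in},n}_i\leq\rho<+\infty$. Lemma \ref{lem:control-fragmentation} then yields, for each $T>0$, a constant $K_T$ with $\Eg\big[\int_0^T\big(2b_2 c^n_2(s)+\sum_{i\geq 2}b_{i+1}c^n_{i+1}(s)\big)\,ds\big]\leq K_T$ for all $n$. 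Since the bracket is $\geq b_2 c^n_2+\sum_{i\geq 2}b_{i+1}c^n_{i+1}=\sum_{i\geq 2}b_i c^n_i=\sum_{i\geq 1}B_{i+1}(c^n)$, this controls the fragmentation part uniformly in $n$, and adding the two estimates gives \eqref{eq:estimate-sum-B_i}.

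The main (and essentially the only) subtle point is that \eqref{bd:hyp2} does not bound $\{b_i\}$, so there is no direct estimate available for $\sum_i b_i c^n_i$; the argument must rely on the feature — built into Lemma \ref{lem:control-fragmentation} — that the constant $K_T$ is independent of the fragmentation rates, i.e.\ the fragmentation term is self-controlled once tested against the linear weight $g_i=i$, which by \eqref{bd:hyp2} is precisely the borderline admissible choice ($a_i(g_{i+1}-g_i)\lesssim g_i$). Everything else is elementary bookkeeping.
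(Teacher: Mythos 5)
Your proof is correct and follows essentially the same route as the paper: the paper's (very terse) proof likewise applies Lemma \ref{lem:control-fragmentation} with $g_i=i$ and $N=2$ for the fragmentation term, and disposes of the coagulation term via Hypothesis \eqref{bd:hyp2} and the mass conservation \eqref{sbd:mass-conservation}, exactly as you do. You have merely spelled out the bookkeeping (the pathwise bound $\sum_i A_i \leq K\rho^2$, the verification of the hypotheses of Lemma \ref{lem:control-fragmentation}, and the comparison $b_2c_2^n+\sum_{i\geq 2}b_{i+1}c_{i+1}^n\leq 2b_2c_2^n+\sum_{i\geq 2}b_{i+1}c_{i+1}^n$) that the paper leaves implicit.
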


\begin{proof}
 Consider the measurable function defined by $\psi(\eta) := \sum_{i=2}^{+\infty} i \eta_i$ on $\Ec_\rho^n$. Hence, using the martingale \eqref{martingale-psi}, we have, for all $t\geq 0$ and $n\geq 1$,
\begin{multline*}
 \Eg \sum_{i=2}^{+\infty} i c^n_i(t) + \Eg  \int_0^t \left\{ 2 b_2c_2^n(s) + \sum_{i=2}^{+\infty} b_{i+1} c^n_{i+1}(s)  \right\} ds 
 = \Eg  \sum_{i=2}^{+\infty} i c^{n}_i(0) \\ + \Eg  \int_0^t \left\{  2 a_{1}c_1^n(s)(c^n_{1}(s)-\tfrac \rho n) +  \sum_{i=2}^{+\infty}  a_{i}c_1^n(s)c^n_i(s) \right\} ds\,.
\end{multline*}
It follows
\begin{equation} \label{bd:estimate-Bi-Ai}
 \Eg  \int_0^t  \sum_{i=1}^{+\infty} B_{i+1} (c^n(s))  ds 
 \leq  \Eg  \sum_{i=2}^{+\infty} i c^{n}_i(0) +2 \Eg  \int_0^t \com{\sum_{i=1}^\infty} A_i(c^n(s)) ds\,.
\end{equation}
The bound on $A_i$ is a direct consequence of the hypothesis on the coagulation rate, together with the mass conservation Eq. \eqref{sbd:mass-conservation}. The bound on $B_i$ follows from Eq. \eqref{bd:estimate-Bi-Ai} and the one on $A_i$. 
\end{proof}

The next result shows the existence of a finite super-linear moment, key point to control the formation of large-sized clusters. Again, such bound is known for deterministic coagulation-fragmentation models, see for instance \cite{Laurencot2002,Laurencot2002b}. Denote by $\Uc$ the set of non-negative convex functions $\phi$, continuously differentiable with piecewise continuous second derivative, such that $\phi(x) = \tfrac{x^2}{2}$ for $x\in[0,1]$, $\phi'$ is concave, $\phi'(x)\leq x$ for $x\geq0$, and 
\begin{equation} \label{eq:super-linear-phi}
\lim_{x\to+\infty} \frac{\phi(x)}{x} = + \infty.
\end{equation}
In appendix Sec. \ref{sec:tightness} we collect some properties related to the functions belonging to $\Uc$. 

\begin{proposition} \label{prop:phi-bound}
 Assume the sequence $\{c^{\rm in,\, n}\} \subset \Ec_\rho^n$ strongly converges in $X$. Then there exists $\phi \in \Uc$ and \com{$K_2>0$} such that 
 \begin{equation}\label{bd:phi-init}
  \sup_{n\geq 1} \sum_{i=1}^{+\infty} \phi(i) c^{{\rm in},\, n}_i \leq K_2 \,.
 \end{equation}
Let $\{c^n\}$ be the sequence of SBD processes with $c^n(0)=c^{{\rm in},\, n}$. If there exists in addition $K_1>0$ such that $a_i\leq K_1i$ for each $i\geq 1$  then, for all $T>0$, there exists a constant $K_T$ such that
 \begin{equation}\label{bd:phi-control}
  \sup_{n\geq 1}\Eg  \sup_{t\in[0,T]} \sum_{i=1}^{+\infty} \phi(i)c^n_i(t) \leq K_T.
 \end{equation}
\end{proposition}

\begin{proof}
 Let the punctual measure $\nu^n$ on $[0,+\infty)$ defined, for any Borelian set $A$, by
 \[\nu^n(A)= \sum_{i=1}^{+\infty} c_i^{\rm in, n}\delta_i(A).\]
 In particular, for all $n\geq 1$,
 \[\int_0^{+\infty} x\nu^n(dx) = \rho \,.\]
 By convergence of $c^{{\rm in},n}$, the set $\{x\cdot \nu^n\}$ is relatively weakly compact in the space of Borel measures on $\Rb^+$ (recall that this topology is given by the sequential characterization of convergence of measure against bounded continuous functions). Then, by Theorem \ref{lem:extra-moment} \com{given in Appendix} (with $g(x)=x$), there exists $\phi\in\Uc$ \com{and $K_2>0$} such that \eqref{bd:phi-init} holds. Now, using Eqs. \eqref{martingale-psi}-\eqref{quadratic-psi} with $\psi$ given by $\psi(\eta)= \sum_{i=1}^{+\infty} \phi(i)\eta_i$ for $\eta\in \Ec^n_\rho$, we have (recall that for any $n\geq 1$ we deal with finite sums) for $t\geq 0$,
 \begin{multline}\label{bd:martingale-phi}
  \sum_{i=1}^{+\infty} \phi(i)c^n_i(t) + \int_0^t  \sum_{i=1}^{+\infty} B_{i+1}(c^n(s)) \left( \phi(i+1) - \phi(i) - \phi(1)\right) dt\\
  = \sum_{i=1}^{+\infty} \phi(i)c^n_i(0) + \int_0^t  \sum_{i=1}^{+\infty}   A_i(c^n(s)) \left( \phi(i+1) - \phi(i) - \phi(1)\right)dt + M_\phi^n(t)\,,
 \end{multline}
 where $M_\phi^n$ is a square-integrable martingale starting from $0$ and
 \begin{equation*}
 \Eg |M_\phi^n(t)|^2 = \frac{\rho}{n} \Eg \int_0^t \sum_{i= 1}^{+\infty} [A_i(c^n(s)) +B_{i+1}(c^n(s))] (\phi(i+1)-\phi(i) -\phi(1))^2 ds.
 \end{equation*}  
 Since $\phi$ belongs to $\Uc$, by  Prop. \ref{prop:U}, there is a positive constant $K_3$ such that, for all $i\leq n$,
 \[0\leq \frac{\phi(i+1)-\phi(i)-\phi(1)}{n}\leq K_3.\] 
 Then, we obtain 
 \begin{equation} \label{bd:bound-martingale-phi-2}
  \Eg |M_\phi^n(t)|^2 \leq \rho K_3 \Eg  \int_0^t \sum_{i=1}^{+\infty} [A_i(c^n(s)) +B_{i+1}(c^n(s))] (\phi(i+1)-\phi(i) -\phi(1)) ds \,.
 \end{equation} 
 Moreover, from Eq. \eqref{bd:martingale-phi}, since $\phi$ is non-negative, we deduce 
 \begin{multline*}
  \Eg \int_0^t  \sum_{i=1}^{+\infty} B_{i+1}(c^n(s)) \left( \phi(i+1) - \phi(i) - \phi(1)\right) ds\\
  \leq  K_2 + \Eg \int_0^t  \sum_{i=1}^{+\infty}   A_i(c^n(s)) \left( \phi(i+1) - \phi(i) - \phi(1)\right)ds,
 \end{multline*}
 which in Eq. \eqref{bd:bound-martingale-phi-2} yields 
 \begin{equation} \label{bd:bound-martingale-phi-3}
  \Eg |M_\phi^n(t)|^2 \leq \rho K_2 K_3  + 2 \rho K_3 \Eg  \int_0^t \sum_{i=1}^{+\infty} A_i(c^n(s)) (\phi(i+1)-\phi(i) -\phi(1)) ds\,.
 \end{equation}
 Then, by hypothesis on the coagulation rate, the mass conservation and Prop. \ref{prop:U}, for all $t\in[0,T]$,
 \begin{multline} \label{bd:bound-A-i-phi}
  \Eg  \int_0^t \sum_{i=1}^{+\infty} A_i(c^n(s)) \left( \phi(i+1) - \phi(i) - \phi(1)\right) ds  \\
 \leq  m K_1 \rho \int_0^t \Eg \sum_{i=1}^{+\infty} c^n_i(s) (i\phi(1) + \phi(i))  ds \\
 \leq  m K_1\rho^2 \phi(1) T + mK_1\rho \int_0^t  \Eg \sup_{\tau\in[0,s]} \sum_{i=1}^{+\infty} \phi(i) c^n_i(\tau)   ds,
 \end{multline}
 where $m$ is a constant depending on $\phi$. Using Eq.~ \eqref{bd:bound-A-i-phi} into Eq.~\eqref{bd:bound-martingale-phi-3}, and Doob's inequality, there is some positive constant $K_4$ such that,
 \begin{equation} \label{bd:bound-martingale-phi-final}
  \Eg  \sup_{s\in[0,t]}| M_\phi^n(s)|  \leq K_4 \left( 1 +  \int_0^t  \Eg \sup_{\tau\in[0,s]} \sum_{i=1}^{+\infty} \phi(i) c^n_i(\tau)   ds \right)\,.
 \end{equation}
 Now, we use again Eq. \eqref{bd:martingale-phi}, but taking first supremum in time and then expectation, which entails 
 \begin{multline}\label{bd:bound-phi-intermediate}
 \Eg  \sup_{s\in[0,t]}\sum_{i=1}^{+\infty} \phi(i)c^n_i(s) \leq  K_2 +   \Eg   \int_0^t \sum_{i=1}^{+\infty}  A_i(c^n(s)) \left( \phi(i+1) - \phi(i) - \phi(1)\right)  ds \\
 +\Eg \sup_{s\in[0,t]}| M_\phi^n(s)|.
 \end{multline}
 We conclude using Eqs \eqref{bd:bound-A-i-phi} and \eqref{bd:bound-martingale-phi-final} into \eqref{bd:bound-phi-intermediate} and the Gr\"onwall lemma. Note intervertion of time-integral and expectation follows from the fact that c\`adl\`ag process are progressives.
\end{proof}

We recall the counterpart of the above proposition for deterministic solutions. 
\begin{proposition} \label{prop:phi-bound-2}
 Let $c$ be the unique solution to the DBD equations under condition \eqref{bd:hyp3} with $c(0)=c^{\rm in}\in X^+$. There exists $\tilde \phi \in \Uc$ such that 
 \begin{equation}\label{bd:phi-control-det}
   \sup_{t\in[0,T]} \sum_{i=1}^{+\infty} \tilde \phi(i)c_i(t) \leq K_T.
 \end{equation}
\end{proposition}

\begin{proof}
 This is a particular case of \cite[Theorem 2.5 and 4.1]{Laurencot2002b}, or simply repeating the last proof without the martingale estimate.
\end{proof}

\subsection{Proof of convergence} \label{sec:special-case}

Let $c^n$ and  $c$ as stated in Theorem \ref{thm:limitBD_unique}. Our proof is inspired from the uniqueness result of solutions to the DBD equations in \cite{Laurencot2002}. The idea is a direct estimation of the tail of $c^n-c$ (rather than the somehow more natural $l^1$-norm). For that we introduce some notations. We define 
\begin{equation*}
 E_i^n(t)=\sum_{j=i}^{+\infty} [ c_j^n(t) - c_j(t)]\,,
\end{equation*}
for all $t\geq 0$, $i\geq 1$ and $n\geq 1$. \com{In particular, for each $i\geq 1$, 
\begin{equation}\label{eq:init-E}
 |E_i^n(0)| \leq \sum_{j=i}^{+\infty} |c_j^{{\rm in},n} - c_j^{\rm in}| \leq \|c^{{\rm in},n} - c^{\rm in} \| \to_{n\to \infty} 0 \,. 
\end{equation}}
Remark that by Eqs.~\eqref{bd:mass-conservation} and \eqref{sbd:mass-conservation}, we have $|E_i^n(t)|\leq 2\rho$. Then, from the DBD equations~\eqref{eq:BD} and the martingale given in Eq.~\eqref{martingale-psi}, we deduce that, for all $i\geq 2$,
\begin{equation*}
 E_i^n(t) -  E_i^n(0) -\int_0^t \left(J_{i-1}(c^n(s))-J_{i-1}(c(s))\right)ds + e_{i2}\frac \rho n \int_0^t a_1 c_1^n(s)ds
\end{equation*}
is a martingale. We aim to prove that, for all $i \geq 1$, 
\begin{equation} \label{proba:limit}
 \lim_{n\to+\infty} \Eg  \sup_{t\in[0,T]} |E_i^n(t)|  = 0.
\end{equation}
 
Writing an equation on $|E_i^n(t)|$ yields difficulties (around $0$) from the lack of smoothness. Hence we shall work with smooth functions $\vphi$, sufficiently close to $|\cdot|$. Let $\vphi$ \com{be} a continuously differentiable function on $[-2\rho,2\rho]$. Applying It\com{\^o}'s formula, we obtain, for any $N\geq 2$,
\begin{multline}\label{eq:Ito_vphi}
 \sum_{i=2}^N \vphi(E_i^n(t)) =  \sum_{i=2}^N \vphi( E_i^n(0)) -\int_0^t \sum_{i=2}^{N} \vphi'(E_{i}^n(s)) J_{i-1}(c(s)))ds \\+   \frac{n}{\rho} \int_0^t \sum_{i=2}^{N} \Bigg\{ A_{i-1}(c^n(s))  \left[\vphi (E_{i}^n(s)+\tfrac{\rho}{n})- \vphi (E_{i}^n(s))\right]  \\
 +  B_{i}(c^n(s))  \left[\vphi (E_{i}^n(s)-\tfrac{\rho}{n})-\vphi (E_{i}^n(s))\right] \Bigg\} ds + O_\vphi^N(t)\,,
\end{multline}
where $O_\vphi^N$ is an $L^2-$martingale with
\begin{multline} \label{eq:vphi-martingale}
 \Eg |O_\vphi^N(t)|^2 = \frac{n}{\rho}  \Eg \int_0^t \sum_{i=2}^{N} \Bigg\{  A_{i-1}(c^n(s)) \left[\vphi (E_{i}^n(s)+\tfrac{\rho}{n})- \vphi (E_{i}^n(s))\right]^2  \\+ B_{i}(c^n(s)) \left[ \vphi (E_{i}^n(s)-\tfrac{\rho}{n})-\vphi( E_{i}^n(s))\right]^2  \Bigg\} ds \,.
\end{multline}
We collect a first estimate in the next lemma for a certain class of functions $\vphi$. 
\begin{lemma} \label{lem:vphi-E_i}
 Under condition \eqref{bd:hyp3} on the rate constants. Let $\vphi$ \com{be} a non-negative convex function, continuously differentiable on $[-2\rho,2\rho]$, having finite right and left second derivatives, such that there exists $\veps >0$ for which $|x| \leq \vphi(x)\leq |x| + \veps$ for all $x\in\Rb$. For any $T>0$, there exists a constants $K'$ independent on $\vphi$, $N$, $n$ and $\veps$ such that, for all $N\geq 2$ and $n\geq 1$,
 \begin{multline}\label{eq:Ito_vphi-estim-lemma}
  \Eg \sup_{t\in[0,T]} \sum_{i=2}^N \vphi(E_i^n(t)) \leq   \exp (  K'( \|\vphi' \|_{\infty} +1) T) \Bigg\{ \Eg \sum_{i=2}^N |E_i^n(0)| \\
  + b_N \Eg \int_0^T  |E_{N+1}^n(t)|dt + K'( \|\vphi' \|_{\infty} +1)\Eg \int_0^T  \sum_{i=N+1}^{+\infty}  |E_{i}^n(t)| dt \\
  + K'(1 + b_N)T \veps+ \frac 1 n K'  \|\vphi' \|_{\infty} T  +  \frac \rho n \|\vphi''\|_\infty K' + 2\sqrt{ \frac{\rho}{n} K'\|\vphi''\|_\infty} \Bigg\}\,\com{,}
 \end{multline}
 where $\|\cdot\|_\infty$ is the uniform norm on $[-2\rho,2\rho]$.
\end{lemma}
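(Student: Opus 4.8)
The plan is to adapt the deterministic uniqueness argument of \cite{Laurencot2002} to the stochastic setting, monitoring the tail differences $E_i^n$ through the convex smoothing $\vphi$ while keeping track of the extra $\Oc(1/n)$ and martingale contributions. Starting from Ito's formula \eqref{eq:Ito_vphi}, the first step is to Taylor-expand each discrete increment to second order. Since the post-jump state of $c^n$ lies in $\Ec_\rho^n$ whenever the rate $A_{i-1}(c^n(s))$ (resp.\ $B_i(c^n(s))$) is positive, the argument $E_i^n(s)+\tfrac\rho n$ (resp.\ $E_i^n(s)-\tfrac\rho n$) then belongs to $[-2\rho,2\rho]$, so
\[\tfrac n\rho\big(A_{i-1}(c^n)[\vphi(E_i^n+\tfrac\rho n)-\vphi(E_i^n)]+B_i(c^n)[\vphi(E_i^n-\tfrac\rho n)-\vphi(E_i^n)]\big)=\vphi'(E_i^n)\big(A_{i-1}(c^n)-B_i(c^n)\big)+R_i^n,\]
with $|R_i^n|\le\tfrac\rho{2n}\|\vphi''\|_\infty\big(A_{i-1}(c^n)+B_i(c^n)\big)$. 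Using $A_{i-1}(c^n)-B_i(c^n)=J_{i-1}(c^n)-e_{i2}\tfrac\rho n a_1c_1^n$ and recombining with the drift $-\vphi'(E_i^n)J_{i-1}(c)$ of \eqref{eq:Ito_vphi}, the principal drift term becomes $\vphi'(E_i^n)\big(J_{i-1}(c^n)-J_{i-1}(c)\big)$, at the price of an $e_{i2}\tfrac\rho n a_1c_1^n\vphi'(E_2^n)$ correction contributing at most $K'\|\vphi'\|_\infty T/n$ after integration.

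The heart of the proof is the estimate of $\sum_{i=2}^N\vphi'(E_i^n)(J_{i-1}(c^n)-J_{i-1}(c))$. Writing $c_j^n-c_j=E_j^n-E_{j+1}^n$ and $c_1^nc_{i-1}^n-c_1c_{i-1}=c_1^n(E_{i-1}^n-E_i^n)+c_{i-1}(c_1^n-c_1)$, this splits into a coagulation part $a_{i-1}c_1^n\vphi'(E_i^n)(E_{i-1}^n-E_i^n)$, a fragmentation part $-b_i\vphi'(E_i^n)(E_i^n-E_{i+1}^n)$, and a remainder $a_{i-1}c_{i-1}(c_1^n-c_1)\vphi'(E_i^n)$. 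For the first two I would use convexity of $\vphi$ in the subgradient form $\vphi'(x)(y-x)\le\vphi(y)-\vphi(x)$ (the signs matching $a_{i-1}c_1^n\ge0$ and $b_i\ge0$) and then a discrete summation by parts over $i=2,\dots,N$. This leaves interior terms of the type $(a_i-a_{i-1})\vphi(E_i^n)$ and $(b_{i-1}-b_i)\vphi(E_i^n)$, controlled by Hypothesis \eqref{bd:hyp3} together with $c_1^n\le\rho$ and $\vphi\ge0$; a non-positive boundary term at $i=N$ that is dropped; a positive boundary term $b_N\vphi(E_{N+1}^n)\le b_N|E_{N+1}^n|+b_N\veps$; and a term $a_1c_1^n\vphi(E_1^n)$, handled via the mass-conservation identity $E_1^n=-\sum_{j\ge2}E_j^n$ (valid since $c^n$ and $c$ both have mass $\rho$) and $\vphi(x)\le|x|+\veps$. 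The remainder term is bounded directly by $\|\vphi'\|_\infty|c_1^n-c_1|\sum_{i=2}^N a_{i-1}c_{i-1}$, using $|c_1^n-c_1|\le2\sum_{j\ge2}|E_j^n|$ and $\sum_{i=2}^N a_{i-1}c_{i-1}\le K\rho$ (Hypothesis \eqref{bd:hyp2} and mass conservation for $c$).

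Collecting all terms and splitting $\sum_{j\ge2}|E_j^n|=\sum_{j=2}^N|E_j^n|+\sum_{j\ge N+1}|E_j^n|$ with $|E_j^n|\le\vphi(E_j^n)$, Ito's formula \eqref{eq:Ito_vphi} yields an inequality of the form
\[\sum_{i=2}^N\vphi(E_i^n(t))\le\sum_{i=2}^N\vphi(E_i^n(0))+S^N_n(t)+K'(\|\vphi'\|_\infty+1)\int_0^t\sum_{i=2}^N\vphi(E_i^n(s))\,ds+\tfrac\rho n\|\vphi''\|_\infty K'+O_\vphi^N(t),\]
where $S^N_n(t)$ gathers the source terms $b_N\int_0^T|E_{N+1}^n|\,ds$, $K'(\|\vphi'\|_\infty+1)\int_0^T\sum_{i\ge N+1}|E_i^n|\,ds$, $K'(1+b_N)T\veps$ (from the $\veps$-offsets) and $K'\|\vphi'\|_\infty T/n$ (the first-step correction), and $K'$ is independent of $N$, $n$, $\veps$ and $\vphi$. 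One then takes the supremum over $t\in[0,T]$ and the expectation: the non-negative source integrands pass through the supremum, the initial term is bounded by $\Eg\sum_{i=2}^N|E_i^n(0)|$ up to the $\veps$-offset, and the martingale is controlled by Doob's $L^2$-inequality together with its predictable bracket \eqref{eq:vphi-martingale}, itself estimated through Lemma \ref{lem:control-coagulation-fragmentation}, producing the $2\sqrt{\tfrac\rho n K'\|\vphi''\|_\infty}$ term. Gr\"onwall's lemma applied to $t\mapsto\Eg\sup_{\tau\le t}\sum_{i=2}^N\vphi(E_i^n(\tau))$ then gives \eqref{eq:Ito_vphi-estim-lemma}; the exchanges of expectation and time integral are justified by progressivity as in Lemma \ref{lem:control-fragmentation}.

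The step I expect to be the main obstacle is the summation-by-parts bookkeeping of the second paragraph: getting the correct signs of all boundary contributions at $i=N$, treating the exceptional terms at $i=1$ and $i=2$ via the mass-conservation identity, and arranging the estimates so that $K'$ comes out independent of $N$, $n$, $\veps$ and of $\vphi$, with only $\|\vphi'\|_\infty$, $\|\vphi''\|_\infty$ and $b_N$ surviving explicitly. The genuinely stochastic ingredients — the remainders $R_i^n$, the $1/n$ drift correction, and the martingale $O_\vphi^N$ — are by comparison routine once Lemma \ref{lem:control-coagulation-fragmentation} is available.
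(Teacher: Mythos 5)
Your proposal follows essentially the same route as the paper's proof: Ito's formula with a second-order Taylor remainder, the identical decomposition of $J_{i-1}(c^n)-J_{i-1}(c)$ into coagulation, fragmentation and $c_1$-difference parts, the subgradient inequality plus Abel summation under \eqref{bd:hyp3}, the mass-conservation identities for $E_1^n$ and $c_1^n-c_1$, and finally Doob's inequality with Lemma \ref{lem:control-coagulation-fragmentation} followed by Gr\"onwall. The argument is correct and all the key steps match the paper's.
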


\begin{proof}
 Let $T>0$, $N\geq 2$ and $n\geq 1$. Since $\vphi$ is continuously differentiable with finite right and left second derivatives, by Taylor's expansion, we deduce from Eq.~\eqref{eq:Ito_vphi} and the mass conservation Eq.~\eqref{sbd:mass-conservation}, that, for all $t\leq T$,
 \begin{multline}\label{eq:Ito_vphi-2}
  \sum_{i=2}^N \vphi(E_i^n(t)) \leq  \sum_{i=2}^N \vphi( E_i^n(0)) + \int_0^t \sum_{i=2}^{N} \vphi'(E_{i}^n(s)) [ J_{i-1}(c^n(s))  - J_{i-1}(c(s))]ds  \\
  + \frac{\rho^2}{n} a_1 \|\vphi'\|_\infty T + \frac \rho n \|\vphi''\|_\infty \int_0^t \sum_{i=2}^{N} \left\{ A_{i-1}(c^n(s)) + B_{i}(c^n(s)) \right\} ds +  O_\vphi^N(t).
 \end{multline}
 We now write
 \begin{equation*}
  J_{i-1}(c^n)-J_{i-1}(c)=   a_{i-1}c_{i-1}\left(c_1^n-c_1\right) + a_{i-1}c_1^n\left(E_{i-1}^n-E_{i}^n\right)-b_{i}\left(E_{i}^n-E_{i+1}^n\right)\,.
 \end{equation*}
 Then, by convexity of $\vphi$, we have for all $i \geq 2$, $s\geq 0$,
 \begin{multline}\label{etape-1}
  \vphi'(E_{i}^n(s)) [ J_{i-1}(c^n(s))  - J_{i-1}(c(s))] \leq    \|\vphi' \|_{\infty} a_{i-1}c_{i-1}(s)  |c_1^n(s)-c_1(s)|   \vphantom{ \frac \rho n}\\
  +  a_{i-1}c_1^n\left(\vphi(E_{i-1}^n(s))-\vphi(E_{i}^n(s))\right)  + b_{i}  \left(\vphi(E_{i+1}^n(s))-\vphi(E_{i}^n(s))\right)\vphantom{ \frac \rho n} .
 \end{multline}
 Summing Eq.~\eqref{etape-1} from $i=2$ to $N$ and reordering sums yields to
 \begin{multline}\label{etape-2}
  \sum_{i=2}^{N} \vphi'(E_{i}^n(s)) [ J_{i-1}^n(c^n(s))  - J_{i-1}(c(s))] \leq \|\vphi' \|_{\infty} | c_1^n(s)-c_1(s)| \sum_{i=1}^{N-1}  a_{i}c_{i}(s) \\
  + a_1c_1^n \vphi (E_1^n(s)) - a_{N-1}c_1^n \vphi (E_N^n(s))  + \sum_{i=2}^{N-1}  (a_{i}- a_{i-1}) c_1^n(s) \vphi(E_{i}^n(s))\\
  + b_N \vphi (E_{N+1}^n(s)) - b_2 \vphi (E_2^n(s))  + \sum_{i=2}^{N-1}  (b_{i}- b_{i+1}) \vphi(E_{i+1}^n(s)).
 \end{multline}
 Using condition \eqref{bd:hyp3}, the mass conservation \eqref{sbd:mass-conservation} and dropping non-positives terms into Eq.~\eqref{etape-2}, entails
 \begin{multline}\label{etape-3}
  \sum_{i=2}^{N} \vphi'(E_{i}^n(s)) [ J_{i-1}^n(c^n(s))  - J_{i-1}(c(s))] \leq \max(K,a_1) \rho \|\vphi' \|_{\infty} | c_1^n(s)-c_1(s)| \\
  + a_1\rho \vphi (E_1^n(s))  + b_N \vphi (E_{N+1}^n(s))  + K (\rho+1)\sum_{i=2}^{N}  \vphi(E_{i}^n(s)).
 \end{multline}
 Note that using the mass conservation Eqs.~\eqref{bd:mass-conservation} and \eqref{sbd:mass-conservation}, we deduce that $c_1^n -c_1 = -E_2 - \sum_{i= 2}^{+\infty} E_i^n$, so that 
 \begin{equation}\label{borne-c1-c1}
  |c_1^n(s) - c_1(s)| \leq |E_2^n(s)|+ \sum_{i=2}^{+\infty} |E_i^n(s)| \leq 2 \sum_{i=2}^{N} \vphi(E_i^n(s)) +  \sum_{i=N+1}^{+\infty} |E_i^n(s)|\,,
 \end{equation}
 as $|x| \leq \vphi(x)$. Since $E_1^n =  c_1^n - c_1 + E_2^n$ and $\vphi(x)\leq |x|+\veps$, we have, by Eq. \eqref{borne-c1-c1},
 \begin{equation}\label{borne-phi-E1}
  \vphi(E_1^n(s)) \leq \veps +|E_1^n(s)| \leq \veps + 3 \sum_{i=2}^{N} \vphi(E_i^n(s)) + \sum_{i=N+1}^{+\infty} |E_i^n(s)|.
 \end{equation}
 Combining Eqs.~\eqref{borne-c1-c1}~and~\eqref{borne-phi-E1} into \eqref{etape-3}, we deduce that there exists a constant $K'$ independent on $\vphi$, $N$, $n$ and $\veps$, such that, for all $s\geq 0$,
 \begin{multline}\label{etape-4}
  \sum_{i=2}^{N} \vphi'(E_{i}^n(s)) [ J_{i-1}^n(c^n(s))  - J_{i-1}(c(s))] \leq b_N \vphi (E_{N+1}^n(s)) + a_1\rho\veps  \\
  + K'( \|\vphi' \|_{\infty} +1)\left(\sum_{i=2}^{N}  \vphi(E_{i}^n(s)) + \sum_{i=N+1}^{+\infty} |E_i^n(s)|  \right).
 \end{multline}
 Taking supremum in time, then expectation, we deduce from \eqref{eq:Ito_vphi-2}~and~\eqref{etape-4}, there exists a constant (again denoted by $K'$) independent on $\vphi$, $N$, $n$ and $\veps$, such that, for all $t\in[0,T]$ 
 \begin{multline}\label{eq:Ito_vphi-intermed}
  \Eg  \sup_{s\in[0,t]} \sum_{i=2}^N \vphi(E_i^n(s)) \leq  \Eg \sum_{i=2}^N \vphi(E_i^n(0)) + K'(1 + b_N)T \veps+ \frac 1 n K'  \|\vphi' \|_{\infty} T \\
  + b_N \Eg  \int_0^t   |E_{N+1}^n(s)|ds  + K'( \|\vphi' \|_{\infty} +1)\Eg  \int_0^t  \sum_{i=N+1}^{+\infty}  |E_{i}^n(s)| ds   \\
    +  \frac \rho n \|\vphi''\|_\infty \Eg  \int_0^t \sum_{i=2}^{N} \left\{ A_{i-1}(c^n(s)) + B_{i}(c^n(s)) \right\} ds \\
   +   K'( \|\vphi' \|_{\infty} +1) \int_0^t \Eg  \sup_{s\in[0,\tau]} \sum_{i=2}^{N} |E_i^n(\tau)| d\tau  +  \Eg  \sup_{s\in[0,t]} |O_\vphi^N(s)|  \,.
 \end{multline}
 We observe that by Doob's inequality and Eq. \eqref{eq:vphi-martingale}, we have
 \begin{equation} \label{eq:var-quad-On}
 \Eg\sup_{t\in[0,T]} |O^N_{\vphi}(t)|  \leq 2 \left( \frac{\rho}{n}\|\vphi' \|_{\infty}^2 \Eg \int_0^T \sum_{i=2}^{N}\left\{A_{i-1}(c^n(s)) +  B_{i}(c^n(s))\right\} ds \right)^{1/2}.
 \end{equation}
 Using Lemma \ref{lem:control-coagulation-fragmentation}, we deduce from Eqs.~\eqref{eq:Ito_vphi-intermed}-\eqref{eq:var-quad-On} that
 \begin{multline*}
  \Eg  \sup_{s\in[0,t]} \sum_{i=2}^N \vphi(E_i^n(s))  \leq  \Eg  \sum_{i=2}^N \vphi(E_i^n(0))  +   K'( \|\vphi' \|_{\infty} +1) \int_0^t \Eg  \sup_{s\in[0,\tau]} \sum_{i=2}^{N} |E_i^n(\tau)| d\tau   \\
  +b_N \Eg  \int_0^t   |E_{N+1}^n(s)|ds  + K'( \|\vphi' \|_{\infty} +1)\Eg  \int_0^t  \sum_{i=N+1}^{+\infty}  |E_{i}^n(s)| ds  \\
  + K'(1 + b_N)T \veps+ \frac 1 n K'  \|\vphi' \|_{\infty} T  +  \frac \rho n \|\vphi''\|_\infty K_T + 2\sqrt{ \frac{\rho}{n} K_T\|\vphi''\|_\infty}\,,
 \end{multline*}
 where $K_T$ is the constant in Eq.~\eqref{eq:estimate-sum-B_i}. We conclude that Eq~\eqref{eq:Ito_vphi-estim-lemma} holds by the Gr\"onwall's lemma.
\end{proof}

To be able to pass in the limit $n$ goes to $+\infty$ and then $\veps$ to $0$ into Eq.~\eqref{eq:Ito_vphi-estim-lemma}, we need the next lemma:

\begin{lemma} 
 Under condition \eqref{bd:hyp3} on the rate constants\com{,} we have the following limits:
 \begin{equation} \label{term-EN}
  \lim_{N\to +\infty} \ \sup_{n\geq 1} \ \Eg  \sup_{t\in[0,T]} \sum_{i=N+1}^{+\infty}  |E_{i}^n(t)|  = 0,
 \end{equation}
 and 
 \begin{equation} \label{term-bNEN}
  \lim_{N\to+\infty} \ \sup_{n\geq 1} \ \Eg  \int_0^T  b_N  |E_{N+1}^n(t)|dt  = 0.
 \end{equation}
\end{lemma}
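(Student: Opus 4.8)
The plan is to reduce both limits to estimates on the tail sums $\sum_{j>N}c_j^n(t)$, $\sum_{j>N}jc_j^n(t)$ (and the corresponding quantities for $c$), controlled by the super-linear moment bounds of Proposition~\ref{prop:phi-bound} and \eqref{bd:phi-control_limit}, together with — for the term carrying the factor $b_N$ — Lemma~\ref{lem:control-fragmentation} used with the weight $g_i=\phi(i)$, where $\phi\in\Uc$ is the function furnished by Proposition~\ref{prop:phi-bound}. First I would record the elementary bound, valid for all $t$ and $n$,
\[
\sum_{i=N+1}^{+\infty}|E_i^n(t)|\le\sum_{i=N+1}^{+\infty}\sum_{j\ge i}|c_j^n(t)-c_j(t)|=\sum_{j=N+1}^{+\infty}(j-N)|c_j^n(t)-c_j(t)|\le\sum_{j=N+1}^{+\infty}j\big(c_j^n(t)+c_j(t)\big),
\]
and likewise $|E_{N+1}^n(t)|\le\sum_{j\ge N+1}(c_j^n(t)+c_j(t))$. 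For \eqref{term-EN} this is already enough: using $\sum_{j>N}jc_j^n(t)\le\big(\sup_{j>N}j/\phi(j)\big)\sum_j\phi(j)c_j^n(t)$ and the same for $c$, one obtains
\[
\sup_{n\ge1}\Eg\sup_{t\in[0,T]}\sum_{i=N+1}^{+\infty}|E_i^n(t)|\le\Big(\sup_{j>N}\tfrac{j}{\phi(j)}\Big)\Big(\sup_{n\ge1}\Eg\sup_{t\in[0,T]}\sum_j\phi(j)c_j^n(t)+\sup_{t\in[0,T]}\sum_j\phi(j)c_j(t)\Big),
\]
which tends to $0$ as $N\to\infty$ by \eqref{bd:phi-control}, \eqref{bd:phi-control_limit} and the super-linearity \eqref{eq:super-linear-phi} of $\phi$.

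For \eqref{term-bNEN} the factor $b_N$, possibly unbounded under \eqref{bd:hyp3}, is the delicate point. Here I would use that \eqref{bd:hyp3} gives $b_N\le b_j+(j-N)K\le b_j+jK$ for every $j\ge N$, so that $b_N\sum_{j>N}(c_j^n+c_j)\le\sum_{j>N}b_j(c_j^n+c_j)+K\sum_{j>N}j(c_j^n+c_j)$. The $jK$-piece is treated exactly as in the previous paragraph. For the $b_j$-piece: the part carrying $c_j$ is deterministic and is controlled by point~3 of Definition~\ref{def:solBD}, which gives $\int_0^T\sum_{j\ge2}b_jc_j(s)\,ds<+\infty$, whence $\int_0^T\sum_{j>N}b_jc_j(s)\,ds\to0$; the part carrying $c_j^n$ is $\Eg\int_0^T\sum_{j>N}b_jc_j^n(s)\,ds=\Eg\int_0^T\sum_{i\ge N}B_{i+1}(c^n(s))\,ds$, and this is where Lemma~\ref{lem:control-fragmentation} with $g_i=\phi(i)$ enters.

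To apply it one must check its hypotheses for $g_i=\phi(i)$ (and starting index $2$): $\phi$ is non-negative and non-decreasing; by the properties of $\Uc$-functions (Proposition~\ref{prop:U}), in particular $\phi'(i)\le4\phi(i)/i$ — a consequence of the concavity of $\phi'$ with $\phi'(0)=0$ — and the bound $a_i\le\max(K,a_1)\,i$ implied by \eqref{bd:hyp3}, one gets $a_i(\phi(i+1)-\phi(i))\le a_i\phi'(i+1)\le K_0\phi(i)$ for a suitable $K_0$; and $\sup_n\Eg[\sum_{i\ge2}\phi(i)c_i^n(0)]<+\infty$ by \eqref{bd:phi-init}. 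Lemma~\ref{lem:control-fragmentation} then yields a constant $K_T'$, independent of $n$ and of the index $N$ in \eqref{term-bNEN}, with $\Eg\int_0^T\sum_{i\ge2}(\phi(i+1)-\phi(i))B_{i+1}(c^n(s))\,ds\le K_T'$. Since $\phi(i+1)-\phi(i)\ge\phi'(i)\ge\phi'(N)$ for $i\ge N$, and $\phi'(N)\to+\infty$ (because $\phi'(x)\ge\phi(x)/x$ and $\phi$ is super-linear), this gives
\[
\sup_{n\ge1}\Eg\int_0^T\sum_{j>N}b_jc_j^n(s)\,ds\le\frac{K_T'}{\phi'(N)}\xrightarrow[N\to\infty]{}0,
\]
and \eqref{term-bNEN} follows by collecting the four contributions.

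The main obstacle is exactly this uniform-in-$n$ control of the fragmentation tail $\Eg\int_0^T\sum_{j>N}b_jc_j^n$: the global bound $\sup_n\Eg\int_0^T\sum_j B_j(c^n)\le K_T$ of Lemma~\ref{lem:control-coagulation-fragmentation} does not by itself force the tail to vanish, and Lemma~\ref{lem:control-fragmentation} with the linear weight $g_i=i$ produces a constant that degrades in $N$ (like $a_{N-1}N$ under \eqref{bd:hyp3}). What makes the argument go through is that the super-linear weight $\phi$ of Proposition~\ref{prop:phi-bound} is simultaneously slow enough that $a_i(\phi(i+1)-\phi(i))\lesssim\phi(i)$ — so Lemma~\ref{lem:control-fragmentation} applies once and for all with an $n$-uniform constant — and super-linear, so that its increments $\phi(i+1)-\phi(i)\ge\phi'(i)$ diverge and absorb the tail. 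The remaining steps are routine bookkeeping with tail sums and monotone convergence.
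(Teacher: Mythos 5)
Your proof is correct, and for \eqref{term-EN} and for the decomposition $b_N\le b_j+Kj$ in \eqref{term-bNEN} it coincides with the paper's argument. The one place where you genuinely diverge is the uniform-in-$n$ control of the fragmentation tail $\sup_{n\ge1}\Eg\int_0^T\sum_{j>N}b_jc_j^n(s)\,ds$. The paper does \emph{not} invoke Lemma \ref{lem:control-fragmentation} with a super-linear weight here; it reuses the estimate \eqref{eq:AA-1}, which is obtained from the identity \eqref{bd:estimate-bi} with the linear weight $g_i=i$ applied to the tail starting at $N$: the boundary term $N\,(A_{N-1}-B_N)$ is absorbed by observing that $\sum_{i\ge N}c_i^n(t)-\sum_{i\ge N}c_i^n(0)-\int_0^t(A_{N-1}-B_N)$ is a mean-zero martingale, which leaves only first-moment tails $\sum_{i\ge N}ic_i^n$, each bounded by $\sup_{i\ge N}\tfrac{i}{\phi(i)}$ times the $\phi$-moment of Proposition \ref{prop:phi-bound}. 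So your worry that the linear-weight route ``produces a constant that degrades like $a_{N-1}N$'' applies to a naive use of Lemma \ref{lem:control-fragmentation}, but not to the paper's actual computation. Your alternative — applying Lemma \ref{lem:control-fragmentation} once with $g_i=\phi(i)$ (the hypothesis $a_i(\phi(i+1)-\phi(i))\le K_0\phi(i)$ does hold for $\phi\in\Uc$ under \eqref{bd:hyp3}, by concavity of $\phi'$ through the origin and $a_i\le\max(K,a_1)i$) and then dividing by the diverging increment $\phi(i+1)-\phi(i)\ge\phi'(N)$ — is equally valid and arguably more systematic, since it produces a single $N$-independent constant and extracts the tail decay from the super-linearity of $\phi$ directly; the paper's route is slightly more economical in that it recycles an estimate already derived in the proof of Lemma \ref{lem:control-M_1^N}. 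A final minor remark: for the deterministic moment $\sup_{t\in[0,T]}\sum_j\phi(j)c_j(t)$ you appeal to \eqref{bd:phi-control_limit} (legitimate, since under uniqueness the solution $c$ is the limit point), whereas the paper cites Lauren\c{c}ot's deterministic result, which furnishes a possibly different $\tilde\phi\in\Uc$; either suffices.
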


\begin{proof}
 We first observe that 
 \begin{equation} \label{interm---0}
  \sum_{i= N}^{+\infty} |E_i^n(t)| \leq \sum_{j=N}^{+\infty}  \sum_{i= N}^j (c_j^n(t) + c_j(t)) \leq 2 \sum_{j=N}^{+\infty} jc_j^n(t) +2  \sum_{j=N}^{+\infty} jc_j(t)\,.
 \end{equation}
 From Prop. \ref{prop:phi-bound} and Prop. \ref{prop:phi-bound-2}, there exists $\phi$ and $\tilde \phi$ belonging to $\Uc$ and a constant $K_T$ such that
 \begin{equation}\label{intermediate---1}
  \Eg  \sup_{t\in[0,T]} \sum_{j=N}^{+\infty} jc_j^n(t) \leq \sup_{i\geq N} \frac i {\phi(i)} K_T, 
 \end{equation}
 and
 \begin{equation}\label{phi-tilde}
  \sup_{t\in[0,T]} \sum_{j=N}^{+\infty}  j c_j(t) \leq \sup_{i\geq N} \frac i {\tilde\phi(i)} K_T.
 \end{equation}
 Using Eqs.~\eqref{intermediate---1}-\eqref{phi-tilde} into Eq.~\eqref{interm---0}, together with the properties of $\phi$ and $\tilde \phi$ in $\Uc$, we deduce that Eq. \eqref{term-EN} holds.

 \medskip
 
 We now prove the second limit of the lemma. From Eq.~\eqref{martingale-psi}, with $\psi(\eta)=\sum_{i=N}^{+\infty} i \eta_i$ for $\eta \in \Ec_\rho^n$,
 \begin{multline} \label{bd:estimate-bi}
 \Eg \sum_{i=N}^{+\infty} i c^n_i(t) + \Eg  \int_0^t  \sum_{i=N+1}^{+\infty}  b_{i} c^n_i(s)  ds 
 = \Eg  \sum_{i=N}^{+\infty} i c^{n}_i(0) \\ + \Eg  \int_0^t \left\{ N (a_{N-1}c_1^n(s)c^n_{N-1}(s)-  b_Nc^n_N(s)) +  \sum_{i=N}^{+\infty} a_{i}c_1^n(s)c^n_i(s) \right\} ds\,.
 \end{multline}
 Hence, by condition \eqref{bd:hyp3}, and the mass conservation \eqref{sbd:mass-conservation}, we get 
 \begin{multline}\label{eq:estime_bi_sumN}
  \Eg  \int_0^T \sum_{i=N+1}^{+\infty} b_{i}c^n_{i}(s) ds  \leq  \Eg  \sum_{i=N}^{+\infty} i c^{n}_i(0) \\
  + \Eg  \int_0^t \left\{ N (a_{N-1}c_1^n(s)c^n_{N-1}(s) -b_{N}c^n_N(s))  +  \min(K,a_1) \rho \sum_{i=N}^{+\infty} i c^n_i(s) \right\} ds.
 \end{multline}
 Also, from Eq.~\eqref{martingale-psi} with $\psi(\eta)=\sum_{i=N}^{+\infty} \eta_i$ for $\eta \in \Ec_\rho^n$, we deduce that
 \begin{multline} \label{eq:mart-inter}
 \Eg \int_0^t N(a_{N-1}c_1^n(s)c^n_{N-1}(s) -b_{N}c^n_N(s))ds =  \Eg \sum_{i=N}^{+\infty} N c_i^n(t)  - \Eg\sum_{i=N}^{+\infty} Nc_i^n(0) \\
 \leq \Eg  \sum_{i=N}^{+\infty} i c_i^n(t) .
 \end{multline}
 Hence we obtain, from Eq.~\eqref{eq:estime_bi_sumN} and \eqref{eq:mart-inter},
 \begin{multline}\label{eq:estime_bi_sumN2}
   \Eg  \int_0^T \sum_{i=N+1}^{+\infty} b_{i}c^n_i(s) ds   \leq  \Eg   \sum_{i=N}^{+\infty} i c^{n}_i(0) \\
   + \Eg   \sum_{i=N}^{+\infty} i c_i^n(t)  + \min(K,a_1) \rho\Eg  \int_0^t   \sum_{i=N}^{+\infty} i c^n_i(s)  ds.
 \end{multline}
 Using bound \eqref{intermediate---1}, we deduce from Eq.~\eqref{eq:estime_bi_sumN2},
 \begin{equation} \label{eq:AA-1}
  \Eg \int_0^T  \sum_{i=N+1}^{+\infty} b_{i}c^n_{i}(s) ds   \leq  K_T \sup_{i\geq N} \frac{i}{\phi(i)}
 \end{equation}
 for some new constant $K_T$. By condition \eqref{bd:hyp3} we have $Ki +b_i$ is increasing, then we obtain
 \begin{equation}\label{eq:inter2}
  \Eg  \int_0^T  b_N \sum_{i= N+1}^{+\infty} c_i^n(t) dt  \leq \Eg \int_0^T  \sum_{i = N+1}^{+\infty} b_i c_i^n(t) dt + K \Eg \int_0^T  \sum_{i= N+1}^{+\infty} i c_i^n(t) dt.
 \end{equation}
 Hence, by \eqref{intermediate---1} and the estimate obtained in Eq. \eqref{eq:AA-1}, the right hand side goes to $0$ uniformly in $n$, as $N$ to $+\infty$. Moreover, by definition of the solution to DBD in \cite{Ball1986}, namely time-integrability of $\sum_{i=1}^{+\infty} b_i c_i(t)$, and the mass conservation,
 \[ \lim_{N\to\infty} \int_0^T  \sum_{i = N+1}^{+\infty} b_i c_i(t) dt + K \Eg \int_0^T  \sum_{i= N+1}^{+\infty} i c_i(t) dt=0\,. \]
 Since the analogous of Eq.~\eqref{eq:inter2} holds true for $c$, this allows us to conclude that Eq.~\eqref{term-bNEN} holds.
\end{proof}

\begin{proof}[Proof of the Theorem \ref{thm:limitBD_unique}]
 We are now ready to prove our theorem. We first construct a sequence of function $\{\vphi_\veps\}$ satisfying hypothesis of Lemma \ref{lem:vphi-E_i} with uniformly bounded first derivative. For instance, we can define $\vphi_{\veps}(x) = \tfrac 1 {2\veps} x^2 + \tfrac \veps 2$ for $|x| \leq \veps$ and $\vphi_\veps(x)=|x|$ for $|x|\geq \veps$. Thus $\|\vphi_\veps'\|_\infty\leq 1$ \com{and $\|\vphi_\veps''\|_\infty\leq \tfrac{1}{\veps}$.}
 By Lemma \ref{lem:vphi-E_i} with $\vphi_\veps$ in Eq.~\eqref{eq:Ito_vphi-estim-lemma}, using the \com{convergence in Eq.~\eqref{eq:init-E}, we have for all $N\geq 1$ and $\veps>0$ 
 \begin{multline*}
  \limsup_{n\to +\infty} \Eg \sup_{t\in[0,T]} \sum_{i=2}^N \vphi_\veps(E_i^n(s)) \leq   \exp (  2 K' T) \Bigg\{   
  \sup_{n\geq 0} b_N \Eg \int_0^T  |E_{N+1}^n(s)|ds \\
  +  2 K' \sup_{n\geq 1} \int_0^T \Eg \sum_{i=N}^{+\infty}  |E_{i}^n(s)| + K'(1+b_N)T\veps \Bigg\}.
 \end{multline*}}
 and then for each $N\geq 1$ 
 \begin{multline}\label{eq:Ito_vphi-eps}
  \limsup_{\veps \to 0} \limsup_{\com{n\to +\infty}} \Eg \sup_{t\in[0,T]} \sum_{i=2}^N \vphi_\veps(E_i^n(s)) \leq   \exp (  2 K' T) \Bigg\{   
  \sup_{n\geq 0} b_N \Eg \int_0^T  |E_{N+1}^n(s)|ds \\
  +  2 K' \sup_{n\geq 1} \int_0^T \Eg \sum_{i=N}^{+\infty}  |E_{i}^n(s)| \Bigg\}.
 \end{multline}
 Then, using Eqs. \eqref{term-EN} and \eqref{term-bNEN} into Eq.~\eqref{eq:Ito_vphi-eps} we have
 \begin{equation} \label{final-Ei}
  \lim_{N\to+\infty} \limsup_{\veps \to 0} \limsup_{\com{n\to \infty}} \Eg  \sup_{t\in[0,T]} \sum_{i=2}^N \vphi_\veps(E_i^n(s))  = 0. 
 \end{equation}
 Since $\vphi_\veps(x)\geq |x|$ for all $x\geq 0$, we obtain, for each $i\geq 2$, there exists $N$ large enough such that 
 \[ \Eg  \sup_{t\in[0,T]} |E_i^n(t)|  \leq  \Eg  \sup_{t\in[0,T]} \sum_{i=2}^N \vphi_\veps(E_i^n(t))  \,.\]
 Thus, we deduce from Eq~\eqref{final-Ei}  and the above estimate that Eq.~\eqref{proba:limit} holds true for any $i\geq 2$.  For $i=1$, we have
 \[ |E_1^n(t)| \leq |c_1^n(t)-c_1(t)| + |E_2^n(t)| \leq  2|E_2^n(t)| + \sum_{i=2}^{N} |E_i^n(t)| + \sum_{i=N+1}^{+\infty} |E_i^n(t)|\,. \] 
 Hence from Eq.~\eqref{term-EN}, we conclude that Eq~\eqref{proba:limit} holds for $i=1$ as well.  Finally, we easily deduce the componentwise limit
 \begin{equation*} 
  \lim_{n\to +\infty} \Eg \sup_{t\in[0,T]} |c_i^n(t) -c_i(t)| \leq  \lim_{n\to +\infty} \Eg \sup_{t\in[0,T]} |E_i^n(t)-E_{i+1}^n(t)| =0\,.
 \end{equation*}
 And since
 \[\Eg \sup_{t\in[0,T]} \|c^n(t) -c(t)\| \leq \Eg \sup_{t\in[0,T]} \sum_{i=1}^N i |c_i^n(t) -c_i(t)| + \sup_{i\geq N} \frac i {\phi(i)} K_T + \sup_{i\geq N} \frac i {\tilde \phi(i)} K_T , \]
 where $\phi$ and $\tilde \phi$ follows from Eqs.~\eqref{intermediate---1} and \eqref{phi-tilde}, we conclude that
 \begin{equation*} 
  \lim_{n\to +\infty}\Eg \sup_{t\in[0,T]} \|c^n(t) -c(t)\| =0\,,
 \end{equation*}
 which ends the proof.
\end{proof}

\section{Phase transition} \label{sec:mes-stat}

In this section, we prove Theorem \ref{thm_mes_stat}. We define $\Pi^n$, for any $c \in \Ec_\rho^n$, by
\begin{equation}\label{eq:mes_stat}
\Pi^n(c) = \frac 1 {B_n^z} \prod_{i=1}^n \frac{(\tfrac n \rho Q_i z^i)^{\tfrac n \rho c_i}}{(\tfrac n \rho c_i)!}e^{-\tfrac n \rho Q_i z^i}\,,
\end{equation}
where $z>0$ is arbitrary and $B_n^z$ is the following normalizing constant
\begin{equation}\label{eq:mes_stat_cst}
B_n^z=\sum_{c \in \Ec_\rho^n} \prod_{i=1}^n \frac{(\tfrac n \rho Q_i z^i)^{\tfrac n \rho c_i}}{(\tfrac n \rho c_i)!}e^{-\tfrac n \rho Q_i z^i}\,.
\end{equation}
On\com{e} can easily check that $\Pi^n$ satisfies the reversibility condition: for all $c\in \Ec_\rho^n$, for all $i\geq 1$,
\begin{equation*}
A_i(c)\Pi(c)=B_{i+1}(c+\tfrac \rho n \Delta_i)\Pi(c+\tfrac \rho n \Delta_i)\,, \quad
\end{equation*} 
and thus $\Pi^n$ is the unique invariant distribution of the SBD process on $\Ec_\rho^n$. We start by some algebraic manipulations of the non-equilibirum potential given in Eq.~\eqref{eq:entropy}. We recall that $z_s$ is defined in Eq.~\eqref{eq:critical_zs}. One has, for any $c \in \Ec_\rho^n$, and $z\leq z_s$,
\begin{equation}\label{eq:noneq_pot}
\begin{array}{ccl}
\ds -\frac \rho n \ln \Pi^n(c) & \ds = & \ds \sum_{i=1}^{n} \left\{ -c_i \ln\left(\frac{n}{\rho}Q_iz^i\right) + \frac \rho n  \ln \frac n \rho c_i ! + Q_iz^i \right\}+ \frac \rho n \ln B_n^z \\
\ds & \ds = & \ds \sum_{i=1}^{n} \left\{ c_i \left( \ln\frac{c_i}{Q_iz^i} -1 \right) + Q_iz^i \right\} + R_n(c)+ \frac \rho n \ln B_n^z  \\
   &\ds = &\ds \mathcal H (c|c^z) -\sum_{i=n+1}^{\infty} Q_iz^i+ R_n(c)+ \frac \rho n \ln B_n^z  
\end{array}
\end{equation} 
(with convention $0 \ln 0 = 0$), where we recall that $\mathcal H$ is the relative entropy of the DBD equations, given in Eq.~\eqref{eq:entropy}, and the term $R_n$ is given by
\[ R_n(c) = \frac \rho n \sum_{i=1}^n   \left\{ \ln \frac n \rho c_i ! - \frac n \rho c_i \ln  \frac n \rho c_i + \frac n \rho c_i \right\}. \]
The proof of Theorem \ref{thm_mes_stat} is based on continuity properties of $\mathcal H$ and the convergence to $0$ of each remaining term in Eq.~\eqref{eq:noneq_pot}, along appropriate sequences. We divide the proof in three lemmas. Let us start with a lemma about continuity properties of the functional $\mathcal H$, mainly from \cite{Ball1986}. \com{We recall that a sequentially $weak-*$ continuous function on $X$ is a function continuous for the topology associated to the component-wise convergence.}

\begin{lemma}\label{lem:continuity_H}  
 Assume $0<z_s<+\infty$.
 \begin{enumerate}
  \item If $0 < z < z_s$ and  $\liminf_{i\to+\infty}Q_i^{1/i}>0$, then $\mathcal H(\cdot |c^{z})$ is finite and sequentially strongly continuous on $X$.
  \item If $z=z_s$ and $\lim Q_i^{1/i}$ exists, then $\mathcal H(\cdot|c^{z_s})$ is finite and sequentially $weak-*$ continuous on $X$.
 \end{enumerate}
\end{lemma}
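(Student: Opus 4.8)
The plan is to treat $\mathcal{H}$ termwise and reduce the whole lemma to a single uniform smallness estimate for its tails. Write $q_i:=Q_iz^i>0$ and $\mathcal{H}(c\,|\,c^z)=\sum_{i\geq1}h_i(c_i)$ with $h_i(x):=x\ln(x/q_i)-x+q_i$ (so $h_i(0)=q_i$); each $h_i$ is non-negative, convex and continuous on $[0,+\infty)$. The only pointwise estimate I use is $h_i(x)\leq\bigl(x\ln(x/q_i)\bigr)^+ + q_i$, which, since $(x\ln x)^+=0$ on $[0,1]$, gives
\[ h_i(x)\leq x\,|\ln q_i|+q_i\,,\qquad 0\leq x\leq 1\,. \]
In particular, for $c\in X^+$ with $\|c\|\leq M$ and $i\geq M$ one has $c_i\leq M/i\leq1$, hence $h_i(c_i)\leq c_i|\ln q_i|+q_i$.

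Next I would quantify $|\ln q_i|$ through $q_i^{1/i}=Q_i^{1/i}z$. In case~1, $\liminf Q_i^{1/i}>0$ together with $z<z_s=(\limsup Q_i^{1/i})^{-1}$ force $0<\liminf q_i^{1/i}\leq\limsup q_i^{1/i}<1$; in case~2, $z=z_s$ and the existence of $\lim Q_i^{1/i}=z_s^{-1}$ force $q_i^{1/i}\to1$, so that $\delta_N:=\sup_{i\geq N}|\ln q_i|/i\to0$ as $N\to+\infty$. In both cases there is $C>0$ with $|\ln q_i|\leq Ci$ for all $i$, and the series $\sum_i iq_i=\|c^z\|$ converges (it equals $\|c^{z_s}\|=\rho_s$ in case~2, finite in the relevant regime), hence $\sum_iq_i<+\infty$. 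Plugging these into the pointwise bound gives: \emph{(a)} for every $c\in X^+$ the tail of $\mathcal{H}(c\,|\,c^z)$ from $i\geq\lceil\|c\|\rceil$ is at most $C\|c\|+\sum_iq_i<+\infty$, while the head is a finite sum of finite terms, so $\mathcal{H}(c\,|\,c^z)<+\infty$ — this proves finiteness; \emph{(b)} for any $\{c^n\}\subset X^+$ with $M:=\sup_n\|c^n\|<+\infty$ and any integer $N\geq M$,
\[ \sum_{i\geq N}h_i(c_i^n)\;\leq\;\theta_N\sum_{i\geq N}i\,c_i^n\;+\;\sum_{i\geq N}q_i\,, \]
with $\theta_N=C$ (a constant) in case~1 and $\theta_N=\delta_N\to0$ in case~2.

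To conclude I would invoke the elementary principle: if $c_i^n\to c_i$ for each $i$ and $\sup_n\sum_{i\geq N}h_i(c_i^n)\to0$ as $N\to+\infty$, then $\mathcal{H}(c^n\,|\,c^z)\to\mathcal{H}(c\,|\,c^z)$ — split the series at $N$, let the finite head converge by continuity of the $h_i$, control the $c^n$-tails by hypothesis and the $c$-tail by the finiteness just established. In case~2, weak-$*$ convergence $c^n\rightharpoonup c$ in $X$ yields componentwise convergence and, by Banach--Steinhaus, $M:=\sup_n\|c^n\|<+\infty$ (in our application $\|c^n\|=\rho$); then \emph{(b)} gives $\sum_{i\geq N}h_i(c_i^n)\leq\delta_NM+\sum_{i\geq N}q_i\to0$ uniformly in $n$, and the principle applies. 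In case~1, strong convergence gives componentwise convergence and $\sum_{i\geq N}ic_i^n\leq\sum_{i\geq N}ic_i+\|c^n-c\|$; given $\eta>0$ I first fix $N$ with $C\sum_{i\geq N}ic_i+\sum_{i\geq N}q_i<\eta$, then $n_0$ with $C\|c^n-c\|<\eta$ for $n\geq n_0$, then enlarge $N$ to absorb the finitely many $n<n_0$ (using that each $\mathcal{H}(c^n\,|\,c^z)$ is finite) — again a uniform tail bound, and the principle applies.

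The main obstacle is exactly this uniform tail control, and the two cases are genuinely different. In case~1 the reference profile $q_i=Q_iz^i$ decays only geometrically, so $|\ln q_i|$ grows linearly and the coefficient $\theta_N=C$ does not vanish; a uniform mass bound $\|c^n\|\leq M$ is then not enough, and strong convergence is genuinely needed to make the tail mass $\sum_{i\geq N}ic_i^n$ uniformly small. In case~2, at the critical fugacity $z=z_s$, the relation $q_i^{1/i}\to1$ produces the sub-linear growth $|\ln q_i|=o(i)$, i.e. $\theta_N=\delta_N\to0$, and this is precisely what lets the (automatically bounded) mass absorb the tail under the much weaker weak-$*$ convergence. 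Getting the pointwise bound $h_i(x)\leq x|\ln q_i|+q_i$ and matching it against the growth rate of $|\ln q_i|$ is the heart of the argument; the remaining split-the-series bookkeeping, and the minor care needed because the head index must exceed $\|c\|$, are routine.
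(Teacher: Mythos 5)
Your proof is correct, but it takes a genuinely different route from the paper's. The paper's argument is a three-line reduction: it rewrites $\mathcal H(c|c^z)=G(c)-\rho\ln z-\sum_i ic_i\ln Q_i^{1/i}+\sum_i Q_iz^i$ with $G(c)=\sum_i c_i(\ln c_i-1)$, invokes Ball--Carr--Penrose (Lemma 4.2) for the finiteness and sequential weak-$*$ continuity of $G$ on $X^+$, observes that $\{\ln Q_i^{1/i}\}$ is bounded under $\liminf Q_i^{1/i}>0$ so that the linear term is strongly continuous, and disposes of the critical case $z=z_s$ entirely by citing their Proposition 4.5. You instead re-prove both points from scratch by a termwise decomposition $\mathcal H=\sum_i h_i(c_i)$ with $h_i\geq 0$, the pointwise bound $h_i(x)\leq x|\ln q_i|+q_i$ on $[0,1]$, and a uniform tail estimate governed by $\theta_N=\sup_{i\geq N}|\ln q_i|/i$; the dichotomy $\theta_N\equiv C$ (subcritical, where strong convergence must supply the vanishing tail mass) versus $\theta_N\to 0$ (critical, where the bounded mass suffices under weak-$*$ convergence) is exactly the mechanism hidden inside the two cited BCP results, and your write-up makes it explicit and self-contained. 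What the paper's route buys is brevity and a clean separation of the $Q_i$-dependence from the entropy $G$; what yours buys is independence from the reference and a transparent explanation of \emph{why} the two topologies differ between the two cases. Two small points to tighten: your appeal to Banach--Steinhaus for $\sup_n\|c^n\|<\infty$ requires reading ``weak-$*$'' as convergence in $X\cong\ell^1$ as a dual space (norm-boundedness plus componentwise convergence), not as bare componentwise convergence --- your parenthetical $\|c^n\|=\rho$ covers the actual application, but state the convention; and in case 2 the finiteness of $\sum_i Q_iz_s^i$ (equivalently $c^{z_s}\in X$, i.e.\ $\rho_s<+\infty$) is an implicit standing assumption, as it is in the paper, and is worth flagging.
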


\begin{proof}
 Point 1. Note that we may rewrite
 \[ \mathcal H(c|c^z) = G(c) -\rho \ln z -\sum_{i=1}^{+\infty} ic_i \ln Q_i^{1/i} + \sum_{i=1}^{+\infty} Q_i z^i\,,\]
 where $G(c)= \sum_{i=1}^{+\infty} c_i (\ln c_i - 1)$. By \cite[Lemma 4.2]{Ball1986}, $G$ is finite and sequentially $weak-*$ continuous on $X$, hence also strongly continuous on $X$. As $z<z_s$,  $\sum_{i=1}^{+\infty} Q_i z^i<\infty$. Next,  $\{\ln(Q_i^{1/i})\}$ is bounded as \smash{$0<\liminf Q_i^{1/i} \leq z_s^{-1}=\limsup Q_i^{1/i}<+\infty$}. Thus, $c\mapsto \sum_{i=1}^{+\infty} ic_i \ln Q_i^{1/i}$ is finite and strongly continuous on $X$, and so is $\mathcal H$. The Point 2 is a consequence of \cite[Proposition 4.5]{Ball1986}.
\end{proof}

Now we state an intermediate Lemma which proves that the sum $R_n$ in the non-equilibrium potential goes to $0$.

\begin{lemma} \label{lem:reste_0}
 Let $\{c^n\}$ a sequence belonging to $\Ec_\rho^n$ for each $n\geq1$. We have 
 \[  \lim_{n\to +\infty}    \sum_{i=1}^n \frac \rho n  \left( \ln \frac n \rho c_i^n ! -  \frac n \rho c_i^n \ln  \frac n \rho c_i^n + \frac n \rho c_i^n \right) = 0.\]
\end{lemma}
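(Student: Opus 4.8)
The plan is to reduce the statement to Stirling's formula combined with a counting observation on the support of configurations in $\Ec_\rho^n$. Writing $k_i := \tfrac n\rho c_i^n$, which by definition of $\Ec_\rho^n$ is a non-negative integer, the mass constraint $\sum_{i\ge 1} i c_i^n = \rho$ becomes $\sum_{i=1}^n i k_i = n$, and the quantity to control is
\[
R_n(c^n) = \frac\rho n\sum_{i=1}^n\big(\ln k_i! - k_i\ln k_i + k_i\big),
\]
with the convention $0\ln 0 = 0$, so that indices $i$ with $k_i=0$ contribute nothing.

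First I would record the elementary bound, coming from Stirling's formula $\ln k! = \tfrac12\ln(2\pi k) + k\ln k - k + \lambda_k$ with $0<\lambda_k<\tfrac1{12}$, that there is an absolute constant $C$ such that
\[
0 \le \ln k! - k\ln k + k \le C\big(1 + \ln(1+k)\big),\qquad k\in\Nb,\ k\ge 1,
\]
the lower bound being simply $k!\ge (k/e)^k$. In particular every summand of $R_n(c^n)$ is non-negative, hence $R_n(c^n)\ge 0$.

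The crucial step is the observation that a configuration of $\Ec_\rho^n$ has few non-zero components. Setting $J = \{i\ge 1 : k_i\ge 1\}$, the mass constraint gives $n = \sum_i i k_i \ge \sum_{i\in J} i \ge 1 + 2 + \dots + |J| \ge |J|^2/2$, so $|J|\le\sqrt{2n}$; moreover $k_i \le i k_i \le n$ for every $i$. Plugging these two facts into the Stirling bound yields
\[
0 \le R_n(c^n) \le \frac\rho n\sum_{i\in J} C\big(1+\ln(1+k_i)\big) \le \frac\rho n\,|J|\,C\big(1+\ln(1+n)\big) \le \rho C\sqrt 2\,\frac{1+\ln(1+n)}{\sqrt n},
\]
and the right-hand side tends to $0$ as $n\to+\infty$, which is the claim.

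I expect the only real obstacle to be recognising and exploiting the bound $|J|\le\sqrt{2n}$: a naive estimate treating $R_n(c^n)$ as a sum of up to $n$ terms each of size $O(1/n)$ would only give boundedness, not convergence to $0$. It is the superlinearity of $i\mapsto i$ in the mass constraint that forces the support to have size $O(\sqrt n)$, after which the logarithmic Stirling correction is absorbed by the factor $1/n$.
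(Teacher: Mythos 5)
Your proof is correct, but it takes a genuinely different route from the paper's. The paper also starts from Stirling's bound $0 \le \ln N! - N\ln N + N \le K\ln N$, but then argues by a dominated-convergence-type splitting: each summand $u_i^n$ tends to $0$ for fixed $i$ (because $c_i^n\le\rho$ forces $\tfrac{\rho}{n}\ln\tfrac{n}{\rho}c_i^n\to 0$), each summand is dominated by $Kc_i^n$, and the tail $\sum_{i\ge N}c_i^n\le\rho/N$ is controlled by the mass constraint; the conclusion follows by letting $n\to\infty$ and then $N\to\infty$. You instead exploit the combinatorial fact that a configuration in $\Ec_\rho^n$ has support of size at most $\sqrt{2n}$ (since the smallest possible mass of $|J|$ occupied distinct sizes is $1+2+\dots+|J|$), together with the crude bound $k_i\le n$, to get the explicit estimate $0\le R_n(c^n)\le \rho C\sqrt{2}\,(1+\ln(1+n))/\sqrt{n}$. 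Your argument is sound (the $k_i=1$ and $k_i=0$ cases are handled consistently with the paper's convention $0\ln 0=0$), and it buys something the paper's proof does not: a quantitative rate of convergence, uniform over all sequences $\{c^n\}$ with $c^n\in\Ec_\rho^n$, rather than a mere qualitative limit obtained through a double limit $n\to\infty$, $N\to\infty$. The paper's approach, on the other hand, is the one that generalizes more readily when the mass constraint is replaced by a weaker uniform moment bound, since it only uses $\sum_i ic_i^n\le\rho$ through the tail estimate and does not need the support to be sparse.
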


\begin{proof}
 By Stirling's formula, there exists $K>0$ such that for all $N\geq 2$
 \[ 0 \leq \ln N! -N\ln N +N \leq K \ln N  \,. \]
 Hence, for all $i$ such that $\tfrac n \rho c_i^n \geq 2$
 \begin{equation}\label{eq:bound_stirling}
  0 \leq \frac \rho n  \left( \ln \frac n \rho c_i^n ! -  \frac n \rho c_i^n \ln  \frac n \rho c_i^n + \frac n \rho c_i^n \right) \leq K \frac \rho n  \ln  \frac n \rho c_i^n  \,\com{.}
 \end{equation}
 We define, for all $i\geq 1$, 
 \begin{equation}\label{eq:bound_stirling-2}
  u_i^n =  \begin{cases}
  \frac \rho n  \left( \ln \frac n \rho c_i^n ! - \frac \rho n c_i^n \ln  \frac n \rho c_i^n + \frac n \rho c_i^n \right), & \text {if } \tfrac n \rho c_i^n \geq 2, \\
  \frac{\rho}{n}, & \text{if } \tfrac n \rho c_i^n =1,\\
  0, & \text{else.}
  \end{cases}
 \end{equation}
 Since for all $i$, $c_i^n \leq \rho$, we have by Eqs. \eqref{eq:bound_stirling} and \eqref{eq:bound_stirling-2}, that for all $i$, $u_i^n \to 0$ as $n\to+\infty$. Moreover,  again by Eqs. \eqref{eq:bound_stirling} and \eqref{eq:bound_stirling-2}, we can check that $u_i^n \leq K  c_i^n$ for all $i\geq 1$. Thus, using the mass conservation $\sum_{i=1}^{n}ic_i^n=\rho$, we deduce that, for all $N \geq 1$, and $n\geq N$,
 \[ R_n=\sum_{i=1}^{n} u_i^n \leq  \sum_{i=1}^N u_i^n + K \sum_{i=N}^{n} c_i^n \leq \sum_{i=1}^N u_i^n + \frac K N \rho.\]
 Taking the limit in $n\to+\infty$ and then $N\to +\infty$ ends the proof.
\end{proof} 			

In the last lemma, we control the convergence of the normalizing constant $B_n^{z}$. 
\begin{lemma}\label{lem:Bn} 
 Assume $0<z_s<+\infty$.
 \begin{enumerate}
  \item If $\rho\leq \rho_s$, and $\liminf_{i\to+\infty} Q_i^{1/i}>0$, we have, for $z=z(\rho)$,
  \[\lim_{n\to+\infty} \frac \rho n \ln B_n^{z(\rho)} = 0.\]
  \item If $\rho>\rho_s$ and $\lim Q_i^{1/i}$ exists, we have, for $z=z_s$, 
  \[\lim_{n\to+\infty} \frac \rho n \ln B_n^{z_s} = 0.\]
 \end{enumerate}
\end{lemma}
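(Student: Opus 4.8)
Here is how I would approach the proof.

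\medskip

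\emph{Strategy.} Write $z=z(\rho)$ in case 1 and $z=z_s$ in case 2; in both regimes $\sum_{i\ge1}Q_iz^i<+\infty$ and $\mathcal H(c^z|c^z)=0$. The plan is to show $\limsup_n\tfrac\rho n\ln B_n^z\le0$ and $\liminf_n\tfrac\rho n\ln B_n^z\ge0$. For the upper bound, note that each factor $w_i(k):=e^{-\frac n\rho Q_iz^i}(\tfrac n\rho Q_iz^i)^k/k!$ in \eqref{eq:mes_stat_cst} is a Poisson weight, so $\sum_{k\ge0}w_i(k)=1$; since $c\mapsto(\tfrac n\rho c_i)_{1\le i\le n}$ maps $\Ec_\rho^n$ injectively into $\Nb^n$, summing $\prod_{i=1}^n w_i(\tfrac n\rho c_i)$ over $\Ec_\rho^n$ is at most the sum over all of $\Nb^n$, which equals $\prod_{i=1}^n1=1$; and $B_n^z>0$ since $(n,0,0,\dots)\in\Ec_\rho^n$. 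Hence $\tfrac\rho n\ln B_n^z\le0$ for all $n$, and the substance is the lower bound.

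\medskip

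\emph{Reduction of the lower bound.} For any $c\in\Ec_\rho^n$ one has $\Pi^n(c)\le1$, so $-\tfrac\rho n\ln\Pi^n(c)\ge0$, and the identity \eqref{eq:noneq_pot} rearranges to
\[ \frac\rho n\ln B_n^z = -\frac\rho n\ln\Pi^n(c)-\mathcal H(c|c^z)+\sum_{i>n}Q_iz^i-R_n(c)\ \ge\ -\mathcal H(c|c^z)-R_n(c). \]
So it would suffice to produce, for each $n$, a configuration $c^{*,n}\in\Ec_\rho^n$ with $R_n(c^{*,n})\to0$ and $\mathcal H(c^{*,n}|c^z)\to0$. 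The first holds for \emph{any} sequence in $\Ec_\rho^n$ by Lemma \ref{lem:reste_0}, so the real task is to build $c^{*,n}$ converging to the equilibrium $c^z$ in the topology in which $\mathcal H(\cdot|c^z)$ is continuous (Lemma \ref{lem:continuity_H}), which then forces $\mathcal H(c^{*,n}|c^z)\to\mathcal H(c^z|c^z)=0$.

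\medskip

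\emph{Construction of $c^{*,n}$.} In case 1 (so $\sum_i iQ_iz^i=\rho$) I would round the equilibrium down: $c^{*,n}_i=\tfrac\rho n\lfloor\tfrac n\rho Q_iz^i\rfloor$ for $2\le i\le n$, $c^{*,n}_i=0$ for $i>n$, and $c^{*,n}_1=\rho-\sum_{i\ge2}ic^{*,n}_i$; since $\sum_{i\ge2}ic^{*,n}_i\le\sum_{i\ge2}iQ_iz^i=\rho-z$ this gives $c^{*,n}_1\ge z>0$ and $\tfrac n\rho c^{*,n}_1\in\Nb$, so $c^{*,n}\in\Ec_\rho^n$. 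Because $|c^{*,n}_i-Q_iz^i|\le\min(\tfrac\rho n,Q_iz^i)$ for $i\ge2$, the terms $i|c^{*,n}_i-Q_iz^i|$ are dominated by the summable sequence $iQ_iz^i$ and tend to $0$ for each fixed $i$, so dominated convergence gives $\sum_{i\ge2}i|c^{*,n}_i-Q_iz^i|\to0$ and hence also $|c^{*,n}_1-z|\to0$, i.e. $\|c^{*,n}-c^z\|\to0$ strongly; Lemma \ref{lem:continuity_H} then applies. In case 2 ($\sum_i iQ_iz_s^i=\rho_s<\rho$) the mass defect must be sent to infinity: take $k_i=\lfloor\tfrac n\rho Q_iz_s^i\rfloor$ for $1\le i\le n$, so $\mu_n:=\sum_{i=1}^n i\tfrac\rho nk_i\to\rho_s$ and $L_n:=\rho-\mu_n\to\rho-\rho_s>0$ with $\tfrac n\rho L_n\in\Nb$; with $m_n=\lfloor\sqrt n\rfloor$, $j_n=\lfloor\tfrac n{\rho m_n}L_n\rfloor$ and $\Delta_n=\tfrac n\rho L_n-m_nj_n\in\{0,\dots,m_n-1\}$, define $c^{*,n}$ from $(k_i)$ by adding $j_n$ to $k_{m_n}$ and $\Delta_n$ to $k_1$ (and $0$ for $i>n$). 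Then $\sum_i ic^{*,n}_i=\mu_n+\tfrac\rho n(m_nj_n+\Delta_n)=\rho$ and $\tfrac n\rho c^{*,n}_i\in\Nb$, so $c^{*,n}\in\Ec_\rho^n$; for each fixed $i$ (eventually $m_n>i$) one has $c^{*,n}_i\to Q_iz_s^i$, with the case $i=1$ holding because $\tfrac\rho n\Delta_n<\tfrac{\rho m_n}n\to0$; since the mass stays $\rho$, this means $c^{*,n}\rightharpoonup c^{z_s}$ weak-$*$, and Lemma \ref{lem:continuity_H} applies again. In both cases, combining with Lemma \ref{lem:reste_0} gives $\liminf_n\tfrac\rho n\ln B_n^z\ge\lim_n\big(-\mathcal H(c^{*,n}|c^z)-R_n(c^{*,n})\big)=0$, which with the upper bound proves the lemma.

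\medskip

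\emph{Main obstacle.} The hard part will be the weak-$*$ approximation in case 2: constructing a lattice-admissible $c^{*,n}\in\Ec_\rho^n$ (respecting $\tfrac n\rho c^{*,n}_i\in\Nb$) whose excess mass $\rho-\rho_s$ is carried off to infinity while the low components still converge to $c^{z_s}$ — splitting the integer leftover $\tfrac n\rho L_n$ exactly between one slowly diverging component and the monomer is what makes this work. A secondary point is to check that the continuity of $\mathcal H$ in Lemma \ref{lem:continuity_H} still applies at the endpoint $z=z_s$ in case 1 when $\rho=\rho_s$, which it does since $\liminf_i Q_i^{1/i}>0$ there (the proof of Lemma \ref{lem:continuity_H} only needs $\{\ln Q_i^{1/i}\}$ bounded and $\sum_i Q_iz_s^i<+\infty$).
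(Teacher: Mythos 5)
Your proposal is correct and follows essentially the same route as the paper: the upper bound comes from recognizing the summands of $B_n^z$ as Poisson weights (so the sum over $\Ec_\rho^n$ is at most the sum over $\Nb^n$, which is $1$), and the lower bound comes from evaluating the identity \eqref{eq:noneq_pot} at an explicit test configuration in $\Ec_\rho^n$ with $\Pi^n\leq 1$, then invoking Lemma \ref{lem:continuity_H} and Lemma \ref{lem:reste_0}. The only real divergence is in the construction of the test configuration: the paper rounds down $c_i^z$ for $i\leq n-1$ and dumps the entire mass remainder on the single component of size $n$, whereas you dump it on the monomer (case 1) or split it between the component of size $\lfloor\sqrt{n}\rfloor$ and the monomer (case 2). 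Your version is actually the more careful one, since placing the integer remainder $r_n=n-\sum_{i=1}^{n-1}i\lfloor\tfrac n\rho c_i^{z}\rfloor$ on the component of size $n$ requires $n\mid r_n$ for the lattice condition $\tfrac n\rho x_n^n\in\Nb$ to hold, which the paper does not verify; your splitting resolves this while still sending the excess mass $\rho-\rho_s$ to infinity in case 2 so that the weak-$*$ limit is $c^{z_s}$. Your remark about the endpoint $\rho=\rho_s$ (where $z(\rho)=z_s$ and one needs strong continuity of $\mathcal H(\cdot|c^{z_s})$, available because $\{\ln Q_i^{1/i}\}$ is bounded and $\sum_i Q_i z_s^i<+\infty$) is also a point the paper glosses over.
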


\begin{proof}
 For any $z>0$, we have by Eq.~\eqref{eq:mes_stat_cst} that
 \begin{equation*}
  B_n^z  \leq  \sum_{C\in \mathcal \Nb^n}  \prod_{i=1}^n \frac{(\tfrac n \rho Q_i z^i)^{C_i}}{ (C_i)!}e^{-\tfrac n \rho Q_i z^i}=1\,,
 \end{equation*}
 hence $\tfrac \rho n \ln B_n^z \leq 0$ which entails 
 \begin{equation*}
  \limsup_{n\to+\infty}\frac \rho n \ln B_n^z \leq 0.
 \end{equation*}
 For $z\leq z_s$, and for any $x^n \in \Ec_\rho^n$, as $\Pi(x^n)\leq 1$, we deduce from Eq.~\eqref{eq:noneq_pot} that
 \begin{equation}\label{eq:minoration_Bn}
 \frac \rho n \ln B_n^z \geq  \mathcal -H(x^n| c^z)  - R_n(x^n) + \sum_{i=n+1}^\infty Q_iz^i\,.
 \end{equation}
 Suppose first that $\rho \leq \rho_s$. Then $z(\rho)\leq z_s$, and we can find $x^n \in \Ec_\rho^n$ such that $x^n \to c^{z(\rho)}$ strongly (in norm) in $X$. Indeed, consider \smash{$x^n_i = \tfrac \rho n \lfloor \tfrac n \rho c_i^{z(\rho)}\rfloor$} for $i\leq n-1$ and \smash{$x_n^n = \tfrac{1}{n}(\rho - \sum_{i=1}^{n-1}i x_i^n)$}. Clearly, $x^n$ converges componentwise (thus $weak-*$) to $c^{z(\rho)}$. Moreover, $\| x^n\| = \rho = \|c^{z(\rho)}\|$  thus $x^n$ also converges strongly (in norm) to $c^z$, see for instance \cite[Lemma 3.3]{Ball1986}. 	By Lemma \ref{lem:continuity_H} we have $H(x^n|c^{z(\rho)}) \to H(c^{z(\rho)}|c^{z(\rho)}) =0$. As $c^{z(\rho)}\in X$, $\{Q_iz(\rho)^i\}$ is summable and \smash{$\sum_{i=n+1}^\infty Q_i z(\rho)^i\to 0$} as $n\to \infty$. And by Lemma \ref{lem:reste_0}, as  $x^n \in \Ec_\rho^n$ for each $n$,  $ R_n(x^n) \to 0$ as $n\to \infty$. Thus, we deduce from Eq.~\eqref{eq:minoration_Bn} that \smash{$\liminf_{n\to\infty}  \tfrac \rho n \ln B_n^{z(\rho)} \geq 0$}.

 \medskip
 
 Now take $\rho >\rho_s$. Consider \smash{$x^n_i = \tfrac \rho n \lfloor \tfrac n \rho c_i^{z_s}\rfloor$} for $i\leq n-1$ and \smash{$x_n^n = \frac{1}{n}(\rho - \sum_{i=1}^{n-1}i x_i^n)$}. Then, $x^n \in \Ec_\rho^n$ and $weak-*$ converges towards $c^{z_s}$. Again, by Lemma \ref{lem:continuity_H} and Lemma \ref{lem:reste_0}, we deduce from Eq.~\eqref{eq:minoration_Bn} that \smash{$\liminf_{n\to\infty}  \tfrac \rho n \ln B_n^{z_s} \geq 0$}, which concludes the proof. 
\end{proof}

\medskip

\begin{proof}[Proof of theorem \ref{thm_mes_stat}]
 Suppose first $0<\rho \leq \rho_s$. Choosing $z=z(\rho)\leq z_s$,  in Eq.~\eqref{eq:noneq_pot}, we deduce from Lemma \ref{lem:continuity_H}, Lemma \ref{lem:reste_0} and Lemma \ref{lem:Bn} that
 \[\lim_{n\to +\infty} -\frac \rho n \ln \Pi^n(c^n) = \mathcal H(c|c^{z(\rho)})\,. \]
 Similarly, for $\rho > \rho_s$, choosing $z=z_s$ in Eq.~\eqref{eq:noneq_pot}, gives, with  Lemma \ref{lem:continuity_H}, Lemma \ref{lem:reste_0} and Lemma \ref{lem:Bn} that
 \[\lim_{n\to +\infty} -\frac \rho n \ln \Pi^n(c^n) = \mathcal H(c|c^{z_s})\,. \]
\end{proof}

 \section{Numerical illustration and Discussion}\label{sec:disc}

 In this section, we show a numerical illustration of the law of large numbers and the phase transition phenomenon. We use an exact stochastic simulation algorithm of the trajectories of the SBD process defined by Eq.~\eqref{bd:generator}. For convenience, the algorithm is available at \cite{HingantSTOBEDO} (no claim of originality is being made). Using classical reaction rates from literature \cite{Niethammer2003} for $a_i,b_i$, we simulated a thousand of trajectories with $n=1000$ and three different concentrations $\rho$: $\rho=10$, $\rho=1$ and $\rho=0.1$. Note that with our choice of reaction rates, we have $z_s=1/11$ and $\rho_s\approx 0.1059$.  The illustration of the convergence of the SBD process towards the DBD model can be seen as a single trajectory follows closely its mean value (which overlaps with a numerical simulation of the truncated DBD model, not shown). For supercritical concentration $\rho>\rho_s$, we also see that the largest cluster occupies a large fraction of the total mass, and appears to have a size close to $\rho-\rho_s$. The scaling behavior of the stationary distribution of the SBD model is illustrated with the first $10$ sizes, where the respective mean number of clusters follows closely the deterministic stationary state (little discrepancies are rather due to the finite size truncation than the stochastic fluctuations). Finally, we illustrate the convergence to stationary state with the help of the relative entropy evaluated along the stochastic trajectories (note that the final value of the relative entropy is not zero due to again finite size truncation). For the super-critical case with $\rho=1$, the metastable phenomenon known for the DBD (see \cite{Penrose1989}) clearly appears also in the SBD model: the relative entropy is nearly constant for a large time before converging to its final value. Interestingly, the sudden drop in the relative entropy value coincides with the formation of a giant cluster. Thus escape of the metastable states and formation of a giant cluster appears to be concomitant. These observations will be the subject of future works.
%
%
\begin{figure}[h!]
	\includegraphics{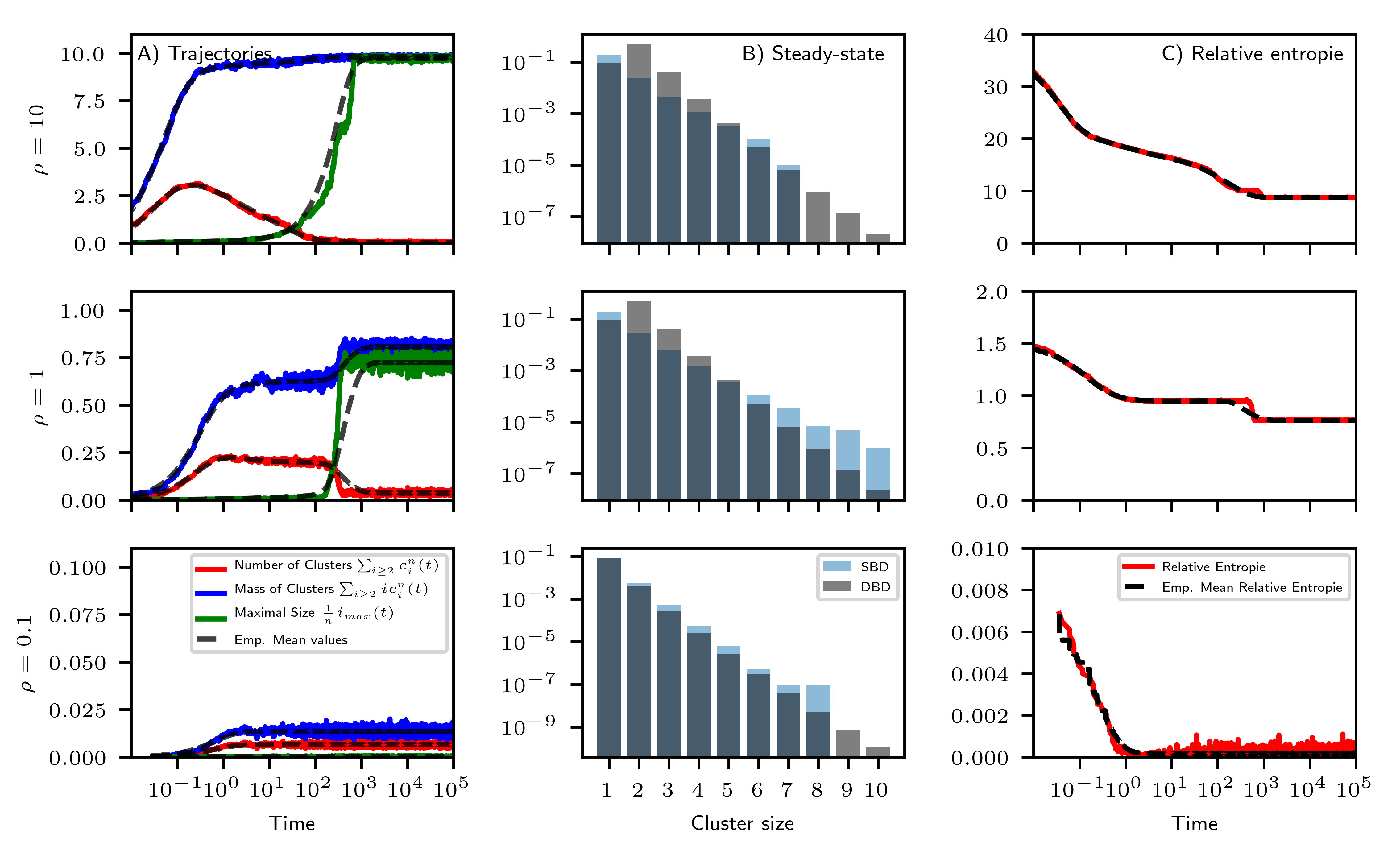}
	\caption{We show results of stochastic simulation algorithm of the SBD process, with $a_i=i^{2/3}$, $b_i=i^{2/3}\left(\frac{1}{11}+\frac{10}{11i^{1/3}}\right)$, $n=1000$ and (top row) $\rho=10$, (middle row) $\rho=1$, (down row) $\rho=0.1$. \textbf{A)} On the left column, we plot in colored lines (see legend) a single trajectory for the total number of clusters bigger than $2$, their associated mass and the normalized size of the biggest cluster, \textit{i.e.} $\frac{1}{n}i_{max}$ where $i_{max}=\max\left(i: c_i^n > 0\right)$. For the three quantities, the sampled mean on 1000 trajectories are superimposed in black dashed lines.\textbf{ B)} On the middle column, we plot in blue the sampled mean (over 1000 realizations) number of clusters of size $1$ to $10$, at time $t=10^5$ and in black the corresponding deterministic stationary state $c^z(\rho_s)$ (for the first two rows) and $c^z(\rho)$ (for the last row).\textbf{ C)} On the right column, we plot in red plain line the relative entropy evaluated along a single stochastic trajectory, and in black dashed lines the sampled mean of this evaluation over 1000 trajectories.}
 \end{figure}	

\section{Appendix: Criterion for weak compactness of measures with fixed density} \label{sec:tightness}

The aim of this section, is to state an alternative criterion of weak compactness, based on a refined version of the De La Vallé Poussin's theorem, see \cite[Proposition I.1.1]{Chauhoan1977} and \cite[Theorem 2.8]{Laurencot2018}. The set of functions $\Uc$ is defined in \eqref{eq:super-linear-phi}. One can obtain the following useful properties for these functions.

\begin{proposition}\label{prop:U}
 Let $\phi \in \Uc$. Then, $\phi$ is increasing, non-negative, and for all $i\geq 1$
 \begin{equation*}
  \begin{array}{l}
   \phi(i+1)-\phi(i)-\phi(1)  \geq   0, \\[0.6em]
   \phi''(i)\leq \phi''(0), \ \phi'(i) \leq i\phi''(i), \  \phi(i)\leq i \phi'(i), \ \text{and} \ \phi(i)/i^2 \leq \phi''(0).
  \end{array}
 \end{equation*}
 Moreover, there exists $m>0$, such that for all $i\geq 1$ 
 \[(i+1)(\phi(i+1) - \phi(i) - \phi(1)) \leq  m(i\phi(1) + \phi(i)).\]
 In particular, for all $i\leq n$,
 \[ \phi(i+1) - \phi(i) - \phi(1) \leq  m \left( \frac{\phi(1)}{n} + \phi''(0) \right). \]
\end{proposition}	

\begin{proof}
 The first line follows from the convexity inequality $\phi(i+1)-\phi(i) \geq \phi(1) - \phi(0) \geq 0$. The second line also follows directly from convexity properties. The third estimates follows from \cite[Lemma 3.2]{Laurencot2002b}. And the fourth combine the third and second.
\end{proof}
 
A function between two topological spaces is said to be proper if the preimage of any compact set is compact. We said a family $\Fc$ of Borel measure on a complete separable metric space $E$ is uniformly bounded if,  $\sup_{\nu\in \Fc} \nu(E)<+\infty$, and uniformly tight if, for any $\veps>0$, there exists a compact $K_\veps$ of $E$ such that $\sup_{\nu\in \Fc} \nu(E-K_\veps)<\veps$. The weak convergence of measure is the convergence of integrals against bounded continuous \com{functions} on $E$. In particular, relative compactness is equivalent to uniformely bounded and tight sequence of measure by the Prohorov's theorem, see \cite[Theorem 8.6.2]{Bogachev2007}.

\begin{theorem}\label{lem:extra-moment}
 Let $\{\nu^n\}$ be a sequence of Borel measure on a complete separabale metric space $E$ and $g$ be a non-negative proper continuous function. The sequence of density measure $\{g\cdot \nu^n\}$ is relatively weakly compact, if and only if, $\{g\cdot\nu^n\}$ is uniformly bounded and there exists $\phi \in \Uc$ such that 
 \begin{equation}\label{phi}
  \sup_{n\geq 1} \, \int_E \phi \circ g \ \nu^n <+\infty \,.
 \end{equation}
\end{theorem}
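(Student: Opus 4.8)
The plan is to deduce the equivalence from Prohorov's theorem (Theorem~\ref{the:prohorov}) together with a de la Vall\'ee Poussin construction tailored to the class $\Uc$. By Theorem~\ref{the:prohorov}, relative weak compactness of a family of Borel measures on a complete separable metric space is the conjunction of uniform boundedness and uniform tightness; since uniform boundedness appears on both sides of the claimed equivalence, everything reduces to showing that, \emph{assuming $\{g\cdot\nu^n\}$ uniformly bounded}, uniform tightness holds if and only if some $\phi\in\Uc$ satisfies $\sup_{n}\int_E\phi\circ g\,\nu^n<+\infty$. Because $g$ is proper and continuous, the sublevel sets $\{g\le R\}=g^{-1}([0,R])$ are compact and every compact subset of $E$ sits inside one of them, so uniform tightness of $\{g\cdot\nu^n\}$ is equivalent to
\[
\theta(R):=\sup_{n\geq1}\,(g\cdot\nu^n)(\{g>R\})=\sup_{n\geq1}\int_{\{g>R\}}g\,\nu^n\ \xrightarrow[R\to+\infty]{}\ 0,
\]
a quantity that is non-increasing in $R$.

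The implication ``$\Leftarrow$'' is then immediate: if $\phi\in\Uc$ and $C:=\sup_n\int_E\phi\circ g\,\nu^n<+\infty$, then $t\mapsto\phi(t)/t$ is non-decreasing (convexity and $\phi(0)=0$), so for $R\ge1$ and $x$ with $g(x)>R$ one has $g(x)\le\phi(g(x))\,R/\phi(R)$; integrating over $\{g>R\}$ gives $\theta(R)\le C\,R/\phi(R)$, which tends to $0$ by the superlinearity $\phi(x)/x\to+\infty$. Together with the assumed uniform boundedness, Theorem~\ref{the:prohorov} yields relative weak compactness.

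For ``$\Rightarrow$'', Theorem~\ref{the:prohorov} already gives uniform boundedness (one of the two asserted conclusions) and $\theta(R)\to0$, and it remains to build $\phi$. I would prescribe its derivative. Put $R_0:=1$ and choose $R_0<R_1<R_2<\cdots$ with the increments $R_{k+1}-R_k$ non-decreasing, $R_1\geq2$, and $\theta(R_k)\leq2^{-k}$ for all $k\geq1$ --- possible since $\theta$ decreases to $0$ and enlarging the $R_k$ only helps. Let $\phi'$ be continuous, equal to the identity on $[0,1]$, and affine on each $[R_k,R_{k+1}]$, $k\ge0$, with $\phi'(R_k)=k+1$; its successive slopes are $1,(R_1-R_0)^{-1},(R_2-R_1)^{-1},\ldots$, positive and non-increasing, so $\phi'$ is concave, increasing, sublinear ($\phi'(x)\le x$), and $\phi'(x)\to+\infty$. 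Then $\phi(x):=\int_0^x\phi'(s)\,ds$ is convex, $C^1$ with piecewise constant (hence piecewise continuous) second derivative, equals $x^2/2$ on $[0,1]$, and $\phi(x)/x\to+\infty$; thus $\phi\in\Uc$. To estimate the moment I split $\int_E\phi\circ g\,\nu^n$ over $\{g\le1\}$, where $\phi\circ g\le\tfrac12\,g$ bounds the contribution by half the uniform bound of $\{g\cdot\nu^n\}$, and over $\{g>1\}$, where the inequality $\phi(x)\le x\,\phi'(x)$ (valid for all $x\ge0$ by convexity, cf.\ Proposition~\ref{prop:U}) gives
\[
\int_{\{g>1\}}\phi\circ g\,\nu^n\ \le\ \int_{\{g>1\}}\phi'(g)\,g\,\nu^n\ =\ \int_{(1,+\infty)}\phi'\,\mu^n,
\]
where $\mu^n$ is the image of $g\cdot\nu^n$ under the map $g$; since $\mu^n((R,+\infty))\le\theta(R)$ and $\phi'\le k+2$ on $(R_k,R_{k+1}]$, this last integral is at most $2\sup_m\mu^m((1,+\infty))+\sum_{k\ge1}(k+2)\,2^{-k}$, which is finite and independent of $n$.

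The routine parts are the measure-theoretic bookkeeping (layer cake / Fubini, monotonicity of $\theta$, and the fact that the image of a uniformly tight family under a proper map stays tight) and the verification that the explicit $\phi$ lies in $\Uc$. The one genuinely delicate point is to meet \emph{all} the structural constraints of $\Uc$ simultaneously --- in particular that $\phi'$ is concave and $\phi'(x)\le x$ --- while still forcing $\phi(x)/x\to+\infty$ and keeping the moment finite; this is exactly why one builds the concave piecewise-affine profile $\phi'$ rather than $\phi$ directly, and why the breakpoints $R_k$ must be spaced so that their increments are non-decreasing (concavity), $\phi'$ stays under the identity (sublinearity), and $\sum_k(k+2)2^{-k}<\infty$ (the uniform moment bound) all hold at once.
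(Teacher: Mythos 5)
Your proof is correct and follows essentially the same route as the paper's: the easy direction via the bound $\int_{\{g>R\}} g\,\nu^n \le \sup_{y>R}\tfrac{y}{\phi(y)}\int_E \phi\circ g\,\nu^n$ plus Prohorov, and the converse via a de la Vall\'ee Poussin construction of a concave piecewise-affine derivative $\phi'$ whose breakpoints are spaced (with non-decreasing increments) according to the decay of the uniform tails, exactly to reconcile membership in $\Uc$ with the finiteness of the moment. The only differences are bookkeeping: the paper discretizes with integer bins $M_k^n=\nu^n(\{k\le g<k+1\})$ and uses $\phi(k+1)\le (k+1)\alpha_{k+1}$, whereas you work with the tail function $\theta(R)$, the pushforward measure, and $\phi(x)\le x\phi'(x)$ --- both yield the same estimate.
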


\begin{proof}
 Assume that $\{g\cdot\nu^n\}$ is uniformly bounded such that Eq.~\eqref{phi} is satisfied for some $\phi\in\Uc$. Let $R>0$ and define the compact $K=g^{-1}[0,R]$, then
 \[ \int_{E-K} g(x) \nu^n(dx) \leq \sup_{y>R} \frac{y}{\phi(y)} \int_E \phi(g(x))\nu^n(dx)\,.\]
 Since $\phi\in \Uc$, the right hand side goes to $0$ has $R\to\infty$, uniformly in $n$ according to Eq.~\eqref{phi}. Thus the sequence is uniformly tight. By the Prohorov theorem, the sequence is relatively weakly compact. 

 \medskip
 
 Now assume $\{g\cdot \nu^n\}$ is relatively weakly compact, or equivalently, $\{g\cdot\nu^n\}$ is uniformly bounded and tight. We will follow the construction of $\phi$ proposed in \cite[Proposition I.1.1]{Chauhoan1977} for uniform integrability. Define for each $\nu^n$ and $k\geq 0$, $M^n_k := \nu^n(\{k \leq g < k+1\})$. By construction of $M_k^n$ it follows that
 \begin{equation*}
 \sum_{k \geq 0} k M_k^n   \leq   \int_E g(x) \nu^n(dx) . 
 \end{equation*}
Since the sequence $\{g\cdot\nu^n\}$ is uniformly bounded, $\sum_{k \geq 0} k M_k^n$ is also uniformly bounded, and we deduce
 \begin{equation}  \label{eq:serie_conv}
 \sup_{n\geq 0} \, \sum_{k \geq 1} (k+1)  M_k^n <+\infty.
 \end{equation}
 Let $i \geq 1$. Since $g$ is proper, the set $K_i=\{g\leq i\}$ is compact,  and
 \begin{equation}\label{bound2} 
  \sum_{k \geq i} (k+1)  M_k^n = \sum_{k\geq i}   \int_{\{k \leq g < k+1\}}(k+1) \nu^n(dx) 
  \leq  2  \int_{E-K_i} g(x) \nu^n(dx).
 \end{equation}
 The function $g$ is continuous, hence bounded on the compacts. Thus, for any compact $K$, there exists $i_0$ such that $K\subset K_{i_0}$ and thus $E-K_{i_0}\subset E-K$. By uniform tightness, and Eq.~\eqref{bound2}, for all $m\geq 0$, there exists $N_m$ such that
 \[\sup_{n\geq 0} \, \sum_{k \geq N_m} (k+1) M_k^n < \frac{1}{(m+3)^3}.\]
 Moreover the sequence $\{N_m\}$ can be chosen such that $N_0\geq 2$, $N_1\geq N_0$ and $N_{m+1} - N_m \geq N_m - N_{m-1}$ for all $m\geq 2$. We define the sequence 
 \begin{equation*}
  \alpha_k = \begin{cases}
             2 \,, & 0\leq k \leq N_0-1 \,,\\
             m+3\,, & N_m \leq k < N_{m+1} \,\com{,}
            \end{cases}
 \end{equation*}
 for all $k\geq 0$. Thus, we have
 \begin{multline*}
  \sum_{k \geq 1} \alpha_{k+1} (k+1)  M_k^n =  \sum_{k = 1}^{N_0-1} \alpha_{k+1} (k+1)  M_k^n  +  \sum_{m\geq 0} \sum_{k = N_m}^{N_{m+1}-1} \alpha_{k+1} (k+1)  M_k^n \\
  \leq    3 \sum_{k = 1}^{N_0-1} (k+1)  M_k^n  +  \sum_{m\geq0} \frac 1 {(m+3)^2}, 
 \end{multline*}
 and thanks to Eq.~\eqref{eq:serie_conv}, it yields  
 \begin{equation} \label{eq:serie_alpha}
  \sup_{n\geq 0} \sum_{k \geq 1} \alpha_{k+1} (k+1) M_k^n  < +\infty . 
 \end{equation}
 Now, we define the function $p$ on $\Rb_+$ by
 \begin{equation*}
   p(t) = \begin{cases}
   t & 0\leq t \leq 1 \\[0.8em] 
   \ds \frac 1 {N_0-1} t + \frac{N_0-2}{N_0-1} &  1\leq t \leq N_0 \\[0.8em] 
   \ds \frac 1 {N_{m+1}-N_m} t + \left( m+2 - \frac{N_m}{N_{m+1}-N_m} \right)&  N_m \leq t \leq N_{m+1},\ \forall m\in\Nb.
  \end{cases}
 \end{equation*}
 and, for all $y\geq 0$,
 \[ \phi(y) = \int_0^y p(t) \, dt . \]
 Hence, for $x\leq 1$, $\phi(x)=x^2/2\leq x/2$. Let $k\geq 1$. It exists $m\geq 0$ such that  $N_m\leq k+1 <N_{m+1}$. Hence for all $t\leq k+1$, as $p$ is increasing, $p(t)\leq p(N_{m+1}) = m+3 = \alpha_{k+1}$. Thus for all $k\geq 1$ we have $\phi(k+1) \leq (k+1)\alpha_{k+1}$. Then, we obtain,
 \begin{multline*}
  \int_{0}^\infty \phi(g(x)) \nu^n(dx)  \leq  \int_{\{g(x) < 1\}} \phi(g(x)) \nu^n(dx)+ \sum_{k\geq 1}\int_{\{k\leq g(x) < k+1\}} \phi(k+1) \nu^n(dx)\\
 \leq  \frac{1}{2}\int_0^\infty g(x) \nu^n(dx) +\sum_{k \geq 1}   (k+1)\alpha_{k+1} M^n_k\,.
 \end{multline*} 
 By hypothesis on $\{g\cdot \nu^n\}$ and the uniform bound \eqref{eq:serie_alpha}, we obtain
 \[ \sup_{n\geq 0}\int_{0}^\infty \phi(g(x)) \nu^n(dx) <+\infty.\]
 The fact that $\phi$ belongs to $\Uc$ is easily checked by construction.
\end{proof}

\section*{Acknowledgements}

E. Hingant thanks the support by Universidad de Bío-Bío Poyecto Regular Interno n. 172708, Grupo de Investigación EDPMoA, and FONDECYT Iniciación n. 11170655.

%

\end{document}